\documentclass[11pt]{article}  
\usepackage{amsmath}
\usepackage{amssymb}
\usepackage{theorem}
\usepackage{euscript}
\usepackage{pstricks}
\usepackage{authblk}
\usepackage{verbatim}
\usepackage{graphics}
\usepackage{graphicx}
\topmargin -1.2cm
\oddsidemargin -0.0cm
\textwidth  16cm 
\headheight 0.0cm
\textheight 23cm
\parindent  6mm
\parskip    6pt
\tolerance  1000

\usepackage{color}

\usepackage{hyperref}
\hypersetup
{
	colorlinks,
	citecolor=black,
	filecolor=black,
	linkcolor=black,
	urlcolor=blue
}
\hypersetup{linktocpage}
\newtheorem{theorem}{Theorem}[section]
\newtheorem{lemma}[theorem]{Lemma}
\newtheorem{corollary}[theorem]{Corollary}
\numberwithin{equation}{section}

\theoremstyle{definition}
 
\theoremstyle{remark}

\newcommand{\brac}[1]{\left(#1\right)}

\newcommand{\brab}[1]{\left\{#1\right\}}


\newcommand{\bk}{{\boldsymbol{k}}}

\newcommand{\br}{{\boldsymbol{r}}}
\newcommand{\bs}{{\boldsymbol{s}}}

\newcommand{\bx}{{\boldsymbol{x}}}
\newcommand{\bE}{{\boldsymbol{E}}}

\newcommand{\by}{{\boldsymbol{y}}}

\newcommand{\bW}{{\boldsymbol{W}}}

\newcommand{\rd}{{\rm d}} 
\newcommand{\Int}{{\rm Int}}

\newcommand{\sign}{{\rm sign\,}} 
\def\II{\mathbb I}

\def\ZZd{{\mathbb Z}^d}
\def\IId{{\mathbb I}^d}
\def\ZZ{{\mathbb Z}}

\def\RR{{\mathbb R}}
\def\RRd{{\mathbb R}^d}

\def\NN{{\mathbb N}}
\def\NNd{{\NN}^d}

\def\II{{\mathbb I}}

\def\NN{{\mathbb N}}
\def\RR{{\mathbb R}}

\def\FF{{\mathcal F}}

\def\IId{{\mathbb I}^d}

\def\NNd{{\mathbb N}^d}
\def\RRd{{\mathbb R}^d}

\def\ZZd{{\mathbb Z}^d}



\def\Hh{{\mathcal H}}

\def\II{{\mathbb I}}

\def\ZZ{{\mathbb Z}}
\def\NN{{\mathbb N}}
\def\RR{{\mathbb R}}

\def\FF{{\mathbb F}}

\def\IId{{\mathbb I}^d}

\def\NNd{{\mathbb N}^d}
\def\RRd{{\mathbb R}^d}

\def\supp{\operatorname{supp}}


\def\Wpmix{W^\alpha_p(\IId)}

\def\Wpgamma{W^\alpha_p(\RRd,\gamma)}
\def\Wap{W^\alpha_p}
\def\Wa{W^\alpha_2(\RRd,\gamma)}
\def\Lqgamma{L_q(\RRd,\gamma)}

\def\BWpmix{\bW^\alpha_p(\IId)}

\def\BWpgamma{\bW^\alpha_p(\RRd,\gamma)}


\newcommand{\norm}[2]{\left\|{#1}\right\|_{#2}}


%

\title{\sffamily {Optimal numerical integration    and approximation of functions
on $\RRd$ equipped with Gaussian measure}}

\author[a]{Dinh D\~ung}
\affil[a]{Information Technology Institute, Vietnam National University, Hanoi
	\protect\\
	144 Xuan Thuy, Cau Giay, Hanoi, Vietnam
	\protect\\
	Email: dinhzung@gmail.com}

\author[b]{Van Kien Nguyen}
\affil[b]{Department of Mathematical Analysis, University of Transport and Communications
	\protect\\	No.3 Cau Giay Street, Lang Thuong Ward, Dong Da District,
	Hanoi, Vietnam
	\protect\\
	Email: kiennv@utc.edu.vn}

\date{\today}
 \tolerance 2500
\begin{document}
\maketitle

\begin{abstract}
 We investigate the numerical approximation of integrals over $\mathbb{R}^d$ equipped with the standard Gaussian measure $\gamma$  for  {integrands}  belonging to the Gaussian-weighted  Sobolev spaces $W^\alpha_p(\mathbb{R}^d, \gamma)$ of  mixed smoothness $\alpha \in \mathbb{N}$ for $1 < p < \infty$. We prove the asymptotic order  of the convergence  of  optimal quadratures based on $n$ integration nodes and propose a novel method for constructing asymptotically optimal quadratures. As for related problems,  we establish by a similar technique  the  asymptotic order of the linear,  Kolmogorov   and sampling $n$-widths in the Gaussian-weighted space $L_q(\mathbb{R}^d, \gamma)$ of the unit ball of $W^\alpha_p(\mathbb{R}^d, \gamma)$ for $1 \leq q < p < \infty$ and  $q=p=2$. 
	
	\medskip
	\noindent
	{\bf Keywords and Phrases}: Multivariate numerical integration; Quadrature; Multivariate approximation; Gaussian-weighted Sobolev space of mixed smoothness; $n$-Widths; Asymptotic order of convergence. 
	
	\medskip
	\noindent
	{\bf MSC (2020)}:   65D30; 65D32; 41A25; 41A46.
	
\end{abstract}

\section{Introduction}
 \label{Introduction}

We investigate numerical approximation of  integrals 
\begin{equation} \label{I(f)}
I(f):=\int_{\RRd} f(\bx) \, \gamma(\rd\bx) = \int_{\RRd} f(\bx) g(\bx) \, \rd\bx
\end{equation}
for functions $f$ belonging  to the {Gaussian-weighted}  Sobolev spaces 
$W^\alpha_p(\mathbb{R}^d, \gamma)$ of  mixed smoothness $\alpha \in \mathbb{N}$ for $1 < p < \infty$ (see Section \ref{Numerical integration} for the definition),
where $\gamma (\rd \bx) = g(\bx) \rd\bx$  is the $d$-dimensional standard Gaussian measure on $\RRd$  with the density 
$$
g(\bx):=(2\pi)^{-d/2} \exp\brac{-|\bx|^2/2},\ \ \bx\in \RRd.
$$
To approximate this integral we use a (linear) quadrature  defined by
\begin{equation} \label{I_n(f)-introduction}
	I_n(f): = \sum_{i=1}^n \lambda_i f(\bx_i) 
\end{equation}
with the convention $I_0(f) = 0$, where $\{\bx_1,\ldots,\bx_n\}\subset \RRd$  are given integration nodes and $(\lambda_1,\ldots,\lambda_n)$ the integration weights. For convenience, we assume that some of the integration nodes may coincide.
Let $\BWpgamma$ be the unit ball of $\Wpgamma$. The optimality  of  quadratures  for  
$\BWpgamma$ is measured by the quantity
\begin{equation} \label{Int_n}
\Int_n(\BWpgamma) :=\inf_{I_n}\sup_{f\in \BWpgamma}|I(f)-I_n(f)|.
\end{equation}

We are interested in the asymptotic order  of this quantity when $n \to \infty$, as well as in constructing asymptotically optimal quadratures. We do not investigate the dependence on the dimension and the problem of tractability. The problem of multivariate numerical integration \eqref{I(f)}--\eqref{I_n(f)-introduction} has been studied in \cite{IKLP2015, IL2015,DILP18} for functions in certain Hermite spaces, in particular, the space $\Hh_{d,\alpha}$ in \cite{DILP18}  which coincides with $W^\alpha_2(\mathbb{R}^d, \gamma)$ in terms of norm equivalence.
So far the best result on this problem is
	\begin{equation*}\label{DILP18}
n^{-\alpha} (\log n)^{\frac{d-1}{2}} 
\ll	
\Int_n\big( \bW^\alpha_2(\RRd, \gamma)\big)  
\ll 
n^{-\alpha} (\log n)^{\frac{d(2\alpha + 3)}{4} - \frac{1}{2}},
\end{equation*}
which has been proven in \cite{DILP18}. Moreover,  the upper bound is achieved by  a translated and scaled quasi-Monte Carlo (QMC) quadrature based on Dick's higher order digital nets. We note the related work \cite{KPPW2020} which studied weighted integration via a change of variables for functions on $\RRd$ from non-weighted spaces of mixed smoothness.

The aim of this paper is to prove the asymptotic order  of  $\Int_n\big( \bW^\alpha_p(\RRd, \gamma)\big)$.
Let us briefly describe the main results. 

For  $\alpha\in \NN$ and $1<p<\infty$, we construct  an  asymptotically optimal quadrature  $I_n^\gamma$ of the form \eqref{I_n(f)-introduction} which  gives the  asymptotic order of the  convergence
\begin{equation} 	\label{AsympQuadrature}
	\sup_{f\in \BWpgamma} \bigg|\int_{\RRd}f(\bx) \gamma(\rd\bx) - I_n^\gamma(f)\bigg| 
\asymp
	\Int_n\big(\BWpgamma\big) 
	\asymp
	n^{-\alpha} (\log n)^{\frac{d-1}{2}}.
\end{equation}
In constructing $I_n^\gamma$, we  propose a novel method assembling an asymptotically optimal quadrature for the related Sobolev spaces on the unit $d$-cube to the  integer-shifted $d$-cubes which cover $\RRd$. The asymptotically optimal quadrature  $I_n^\gamma$ 
is based on very sparse  integration nodes contained in a $d$-ball of radius
$\sqrt{\log n}$.

As for related problems with a similar approach,  we establish the  asymptotic orders of linear $n$-widths  $\lambda_n$, Kolmogorov  $n$-widths $d_n$, and  sampling $n$-widths $\varrho_n$ of the set $\BWpgamma$ in the Gaussian-weighted space $\Lqgamma$  (see Section \ref{Approximation} for definitions).
For $\alpha\in \NN$ and $1\le q<p<\infty$  we prove that
\begin{equation}\label{widths:p>q-introduction}
	\lambda_n
	\asymp		
	d_n
	\asymp 
	n^{-\alpha} (\log n)^{(d-1)\alpha},
\end{equation}
and with the additional condition $q=2$,
\begin{align}\label{sampling-widths:q=2}
	\varrho_n
	\asymp 
	n^{-\alpha} (\log n)^{(d-1)\alpha}.
\end{align}		
For  $\alpha\in \NN$ and $q=p=2$, we prove that
	\begin{equation}\label{widths:p=q=2-introduction}
	\lambda_n 
	=
	d_n
	\asymp 
	n^{-\frac{\alpha}{2}} (\log n)^{\frac{(d-1)\alpha}{2}}, 
\end{equation}		
and with the additional condition $\alpha \ge 2$,
	\begin{equation}\label{sampling-widths:p=q=2-introduction}
	\varrho_n
	\asymp 
	n^{-\frac{\alpha}{2}} (\log n)^{\frac{(d-1)\alpha}{2}}.
\end{equation}
The asymptotic orders \eqref{widths:p>q-introduction}--\eqref{sampling-widths:p=q=2-introduction} show very different approximation results  between the cases $q < p$ and  $q=p=2$. We conjecture that the asymptotic orders \eqref{widths:p=q=2-introduction} and \eqref{sampling-widths:p=q=2-introduction} still hold true for $p,q$ with the restrictions $p = q \not= 2$ and $1 <p< \infty$. The case $1\le p < q <\infty$ of these $n$-widths is excluded from the consideration caused by the natural reason that in this case we do not have  a continuous embedding of $\Wpgamma$ into $L_q(\RRd,\gamma)$. For example, the function $f(\bx)=\prod_{i=1}^d\big(1+x_i^2\big)^{-m}\exp \big(|\bx|^2/(2p)\big)$ belongs to $\Wpgamma$ if $m >1/2+\alpha$. However, this function does not belong to $L_q(\RRd,\gamma)$ when $q >p$.

The paper is organized as follows. In Section \ref{Numerical integration}, we prove the asymptotic order 
 of $\Int_n\big(\BWpgamma\big)$ and construct asymptotically optimal quadratures.  Section \ref{Approximation} is devoted to the proof of the asymptotic order of linear $n$-widths  $\lambda_n$ and Kolmogorov $n$-widths $d_n$  for the cases $q < p$ and  $q=p=2$ and the construction of asymptotically optimal linear approximations. In this section we also give asymptotic order of sampling $n$-widths for the cases $q=2 < p$ and $q=p=2$. In Section \ref{Sec-4}, we  illustrate our integration nodes in comparison with those used in \cite{DILP18} and give a numerical test for the results obtained in Section \ref{Numerical integration}.
\\


\noindent
{\bf Notation.} We write $\RR_1:= \brab{x \in \RR: x \ge 1}$.
For a Banach space $E$, denote by the bold symbol $\bE$ the unit ball in $E$. The letter $d$ is always reserved for
the underlying dimension of $\RR^d$, $\NN^d$, etc. Vectors in $\RRd$  are denoted by boldface
letters. For $\bx \in \RR^d$, $x_i$ denotes the $i$th coordinate, i.e., $\bx := (x_1,\ldots, x_d)$.  If $ 1\le p\leq \infty$, we write
$|\bx|_p := \big(\sum_{i=1}^d |x_i|^p\big)^{1/p}$ with the usual modification when $p=\infty$. When $p=2$ we simply write $|\bx|$. 
For the quantities $A_n$ and $B_n$ depending on 
$n$ in an index set $J$  
we write  $A_n \ll B_n$  
if there exists some constant $C >0$ independent of $n$ such that 
$A_n \leq CB_n$ for all $n \in J$, and  
$A_n \asymp B_n$ if $A_n  \ll B_n $
and $B_n  \ll A_n $. General positive constants or positive constants depending on parameters $\alpha, d,\ldots$ are denoted by $C$ or $C_{\alpha,d,\ldots}$, respectively. Values of constants $C$ and  $C_{\alpha,d}$ in general, are not specified except in the cases when they are precisely given, and may be different in various places. Denote by $|G|$ the cardinality of the finite set $G$. 


	\section{Numerical integration}
\label{Numerical integration}

In this section, based on a quadrature on  the  $d$-cube $\IId:=\big[-\frac{1}{2}, \frac{1}{2}\big]^d$  for numerical integration of functions from classical Sobolev spaces of mixed smoothness on $\IId$, by assembling we construct  a quadrature on $\RRd$ for numerical integration of functions from $\gamma$-weighted Sobolev spaces $\Wpgamma$ which preserves   the  convergence rate.  As a consequence, we prove the asymptotic order of 
 $\Int_n\big(\BWpgamma\big)$.

\subsection{Assembling quadratures}
\label{Subsec-AssemblingQuadratures}
   We first introduce $\gamma$-weighted Sobolev spaces of mixed smoothness.	Let  $1\leq p<\infty$ and $\Omega$ be a Lebesgue measurable set on $\RRd$. 
	We define the $\gamma$-weighted space  $L_p(\Omega,\gamma)$ to be the set of all functions $f$ on $\Omega$ such that the norm
	$$
	\|f\|_{L_p(\Omega,\gamma)} : = \bigg( \int_\Omega |f(\bx)|^p \gamma(\rd \bx)\bigg)^{1/p}
	=
	\bigg( \int_\Omega |f(\bx)|^p g(\bx) \rd \bx\bigg)^{1/p} \ <  \ \infty. 
	$$
	For $\alpha \in \NN$, we define the $\gamma$-weighted  space $\Wap(\Omega,\gamma)$ to be the normed space of all functions $f\in L_p(\Omega,\gamma)$ such that the weak (generalized) partial derivative $D^\br f$ of order $\br$  belongs to $L_p(\Omega,\gamma)$ for all $\br\in \NN_0^d$ satisfying $|\br|_\infty\leq \alpha$. The norm of a  function $f$ in this space 
	is defined by
	\begin{align} \label{W-Omega}
		\|f\|_{\Wap(\Omega,\gamma)}: = \Bigg(\sum_{|\br|_\infty \leq \alpha} \|D^\br f\|_{L_p(\Omega,\gamma)}^p\Bigg)^{1/p}.
	\end{align}
	The space $\Wap(\Omega)$ is defined  as the classical Sobolev space by replacing $L_p(\Omega,\gamma)$ with $L_p(\Omega)$ in \eqref{W-Omega}, where as usual,   $L_p(\Omega)$ denotes the Lebesgue space of functions on $\Omega$ equipped with the usual $p$-integral norm. For technical  convenience we use the conventions 
	$\Int_n := \Int_{\lfloor n \rfloor}$ and $I_n := I_{\lfloor n \rfloor}$ for $n \in \RR_1$.
	
 For numerical approximation of integrals 
	$
	I^\Omega(f):=\int_\Omega f(\bx) \rd\bx
	$
	over the set $\Omega$, we need natural modifications $I_n^\Omega(f)$ 	for functions $f$ on $\Omega$,  and $\Int_n^\Omega(F)$  for  a set $F$ of  functions on  $\Omega$, of the definitions \eqref{I_n(f)-introduction} and \eqref{Int_n}. For simplicity we will drop $\Omega$ from these notations if there is no misunderstanding.


\label{Quadratures on sparse-digital-nets }
Let $\alpha\in \NN$, $1<p<\infty$ and $a >0$, $b \ge 0$.  	
Assume that for the quadrature
\begin{equation}\label{I_m(f)}
	I_m(f): = \sum_{i=1}^m \lambda_i f(\bx_i), \ \ \{\bx_1,\ldots,\bx_m\}\subset \IId,
\end{equation}
holds the convergence rate
\begin{equation}\label{IntError-a,b}
	\bigg|\int_{\IId} f(\bx) \rd \bx  - I_m(f)\bigg| \leq C m^{-a} (\log m)^b \|f\|_{\Wpmix}, 
	\ \  f\in \Wpmix.
\end{equation}
Then based on  $I_m$, we will construct  a quadrature  on $\RRd$  which approximates the integral $I(f)$ with the same convergence rate for 
$f \in \Wpgamma$. 

Our strategy is as follows.  
The integral $I(f)$ can be represented as the sum of component integrals over the  integer-shifted $d$-cubes $\IId_{\bk}$ by 
\begin{align} \label{Int_RRd}
I(f)=	 \sum_{\bk \in \ZZd}\int_{\IId_\bk}f_\bk(\bx)g_\bk(\bx)\rd \bx,
\end{align}
where  for $\bk \in \ZZd$, $\IId_\bk:=\bk+\IId$ 
and for a function $f$  on $\RRd$,  
$f_\bk $ denotes the restriction of $f$ to $\IId_\bk$.
For a given $n \in \RR_1$, we take ``shifted" quadratures $I_{n_\bk}$  of the form \eqref{I_m(f)} for  approximating the component integrals in the sum in \eqref{Int_RRd}. 
The integration nodes  in  
$I_{n_\bk}$, $\bk \in \ZZd$, are taken so that they become sparser as $|\bk|$ gets larger and
$$
\sum_{\bk \in \ZZd} \lfloor n_\bk \rfloor \le n.
$$
In the next step, we  ``assemble" these shifted integration nodes to form a quadrature  $I_n^\gamma$ for approximating $I(f)$. Let us describe this construction in detail.

It is clear that if $f\in \Wpgamma$, then 
$
f_\bk(\cdot+\bk) \in \Wpmix,
$
and
\begin{equation}\label{eq:b4}
\begin{split}
	\|f_\bk(\cdot+\bk)\|_{\Wpmix}&=\Bigg(\sum_{|\br|_\infty \leq \alpha} \|D^\br f_\bk(\cdot+\bk)\|_{L_p(\IId)}^p\Bigg)^{1/p}
	\\
&=\Bigg(\sum_{|\br|_\infty \leq \alpha} \|D^\br f_\bk\|_{L_p(\IId_\bk)}^p\Bigg)^{1/p}
\\ 
&=\Bigg(\sum_{|\br|_\infty \leq \alpha} (2\pi)^{d/2} \int_{\IId_\bk}e^{\frac{|\bx|^2}{2}}|D^\br f_\bk(\bx)| ^pg(\bx)\rd\bx \Bigg)^{1/p}.
\end{split}
\end{equation}
When $\bx \in \IId_{\bk}$ we have 
$e^{\frac{|\bx|^2}{2}}\leq e^{\frac{|\bk + (\sign \bk)/2 |^2}{2}}$,  where $\sign \bk:= \brac{\sign k_1, \ldots , \sign k_d}$ and $\sign x := 1$ if $x \ge 0$, and $\sign x := -1$ otherwise for $x \in \RR$. Therefore,
\begin{equation}\label{eq:norm-fwid}
	\|f_\bk(\cdot+\bk)\|_{\Wpmix}
	\leq  (2\pi)^{\frac{d}{2p}}e^{\frac{|\bk + (\sign \bk)/2 |^2}{2p}}\|f\|_{\Wpgamma}.
\end{equation}
We have
$$\|g_{\bk}(\cdot+\bk)\|_{\Wpmix}=\Bigg(\sum_{|\br|_\infty \leq \alpha} \|D^\br g\|_{L_p(\IId_\bk)}^p\Bigg)^{1/p}.$$
A direct computation shows that for $\br \in \NN_0^d$ we have $D^\br g(\bx)=P_\br(\bx)g(\bx)$ where $P_\br(\bx)$ is a polynomial of order $|\br|_1$ of $\bx$. Moreover, we have $-|x|^2\leq \frac{1}{2}-|k-(\sign k)/2|^2 $ for $x\in [-\frac{1}{2},\frac{1}{2}]+k,\ k\in \ZZ$. Therefore for $\bx \in \IId_\bk$ we get
$$
|D^\br g(\bx)|=\Big|(2\pi)^{-d/2}P_\br(\bx)e^{-\frac{|\bx|^2}{2}}\Big| \leq Ce^{-\frac{|\bx|^2}{2\tau'}} \leq  Ce^{-\frac{|\bk - (\sign \bk)/2 |^2}{2\tau'}}\leq Ce^{-\frac{|\bk|^2}{2\tau}}
	$$
for some $\tau'$ and $\tau$ such that $1<\tau'<\tau<p<\infty $. This implies that
\begin{align} \label{g_bk}
	\|g_{\bk}(\cdot+\bk)\|_{\Wpmix}  \leq  Ce^{-\frac{|\bk|^2}{2\tau}}
\end{align}
with $C$ independent of $\bk \in \ZZd$. Since $\Wpmix$ is a multiplication algebra (see \cite[Theorem 3.16]{NgS17}), from \eqref{eq:norm-fwid} and \eqref{g_bk}  we have that 
\begin{align} \label{multipl-algebra1}
f_{\bk}(\cdot+\bk)g_{\bk}(\cdot+\bk)\in \Wpmix,
\end{align}
 and 
 \begin{equation}  
\begin{aligned} \label{multipl-algebra2}
	\|f_{\bk}(\cdot+\bk)g_{\bk}(\cdot+\bk)\|_{\Wpmix} 
	& 
	\leq C \|f_{\bk}(\cdot+\bk)\|_{\Wpmix}  \cdot \|g_{\bk}(\cdot+\bk)\|_{\Wpmix}
	\\
	& \leq C e^{\frac{|\bk + (\sign \bk)/2 |^2}{2p}-\frac{|\bk|^2}{2\tau}}\|f\|_{\Wpgamma}.
\end{aligned}
\end{equation}
For $1<\tau<p<\infty $, 
 we choose $\delta>0$ so that
\begin{equation}  \label{[tau]<e^{-delta k}1}
	\max \bigg\{e^{-\frac{|\bk - (\sign \bk)/2 |^2}{2}\big(1-\frac{1}{p}\big)},
	e^{\frac{|\bk + (\sign \bk)/2 |^2}{2p}-\frac{|\bk|^2}{2\tau}}\bigg\}
	\leq 
	C e^{-\delta |\bk|^2}
\end{equation}
for $\bk\in \ZZd$, and therefore,
\begin{align}  \label{f_{bk}<}
\|f_{\bk}(\cdot+\bk)g_{\bk}(\cdot+\bk)\|_{\Wpmix} 
	& \leq C e^{-\delta |\bk|^2}\|f\|_{\Wpgamma}, \qquad  \bk\in \ZZd.
\end{align}
We define	for $n\in \RR_1$,
\begin{align} \label{xi-int}	
	\xi_n =  \sqrt{\delta^{-1} 2 a(\log n)}\,,
\end{align}
and for $\bk \in \ZZd$,
\begin{align} \label{n_bk}
	n_{\bk}=
	\begin{cases}
		\varrho n  e^{-\frac{\delta}{2 a}|\bk|^2} &\text{if} \ |\bk|< \xi_n,
		\\
		0&\text{if}\  |\bk|\geq \xi_n,
	\end{cases}
\end{align}
where $\varrho := 2^{-d} \brac{1 - e^{-\frac{\delta}{2 a}}}^{d}$. We have
\begin{align} 	\label{<n2}
	\sum_{|\bk|< \xi_n}  n_\bk  \le n.
\end{align} 
Indeed,
\begin{equation*}
\begin{aligned}
	\sum_{|\bk| < \xi_n}n_\bk 
&	=
	\sum_{|\bk|< \xi_n} \varrho n  e^{-\frac{\delta}{2\alpha}|\bk|^2}
	\leq 
2^{d} \varrho	n  \sum_{s=0}^{\lfloor \xi_n \rfloor} \binom{s+d-1}{d-1}e^{-\frac{\delta}{2 a}s^2}
\\
&	\leq   2^{d}\varrho	n \sum_{s=0}^{\infty} \binom{s+d-1}{d-1}e^{-\frac{\delta}{2 a}s} \leq n, 
\end{aligned}
\end{equation*}
where in the last estimate we  used the well-known formula
\begin{equation}\label{eq-auxilary-01}
\sum_{j=0}^\infty x^j\binom{j+k}{k}=(1-x)^{-k-1}, \ k\in \NN_0, \ x\in (0,1).
\end{equation}
 We define
\begin{equation} \label{I_n^gamma}
I_n(f):=\sum_{|\bk|< \xi_n}I_{n_\bk}(f_{\bk}(\cdot+\bk)g_{\bk}(\cdot+\bk))
	= \sum_{|\bk|< \xi_n}\sum_{j=1}^{\lfloor n_\bk \rfloor} \lambda_j  f_{\bk}(\bx_j+\bk)g_{\bk}(\bx_j+\bk),
\end{equation}
or equivalently,
\begin{equation} \label{I_n^gamma2}
I_n(f):=	\sum_{|\bk|< \xi_n}\sum_{j=1}^{\lfloor n_\bk \rfloor} \lambda_{\bk,j} f(\bx_{\bk,j})
\end{equation}
as a quadrature for the approximate integration of $\gamma$-weighted functions $f$ on $\RRd$,
where $\bx_{\bk,j}:= \bx_j+\bk$ and $\lambda_{\bk,j}:= \lambda_j g_{\bk}(\bx_j+\bk)$ (here for simplicity, with an abuse of notation the dependence of integration nodes and weights on the quadratures  $I_{n_\bk}$ is omitted).  The  integration nodes of the quadrature $I_n$  are
\begin{equation} \label{int-nodes}
\{\bx_{\bk,j}: |\bk|< \xi_n, \, j=1,\ldots,\lfloor n_\bk \rfloor\}
\subset \RRd,
\end{equation}
and the  integration weights 
$$
(\lambda_{\bk,j}: |\bk|< \xi_n, \, j=1,\ldots,\lfloor n_\bk \rfloor).
$$
 Due to \eqref{<n2}, the number of integration nodes  is not greater than $n$.  From the definition we can see that the  integration nodes are contained in the ball of radius  $\xi_n^*:=\sqrt{d}/2 + \xi_n$, i.e., 
$\{\bx_{\bk,j}: |\bk|< \xi_n, \, j=1,\ldots,\lfloor n_\bk \rfloor\}
\subset B(\xi_n^*):= \brab{\bx \in \RRd:\, |\bx| \le \xi_n^*}$. The density of the integration nodes is exponentially decreasing in $|\bk|$ to zero from the origin of $\RRd$ to the boundary of the ball $B(\xi_n^*)$, and  the set of integration nodes is very sparse because of the choice of $n_{\bk}$ as in \eqref{n_bk}.

\begin{theorem} \label{thm:int-general}
	Let $\alpha\in \NN$, $1<p<\infty$ and $a >0$, $b \ge 0$.  
	Assume that for any $m \in \RR_1$, there is  a   quadrature  $I_m$  of the form \eqref{I_m(f)} satisfying \eqref{IntError-a,b}. Then for  the   quadrature  $I_{n}$ defined as in \eqref{I_n^gamma2} 
	we have
	\begin{equation} 	\label{IntError0}
		\bigg|\int_{\RRd}f(\bx) \gamma(\rd\bx) - I_n(f)\bigg| 
		\ll n^{-a}  (\log n)^b \|f\|_{\Wpgamma}, \ \  f \in \Wpgamma.
	\end{equation}
\end{theorem}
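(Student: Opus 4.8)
The plan is to split the error $\left|I(f) - I_n(f)\right|$ into two parts according to the decomposition \eqref{Int_RRd}: the ``tail'' part over supercubes with $|\bk| \ge \xi_n$ (which are not sampled at all), and the ``main'' part over supercubes with $|\bk| < \xi_n$, where we compare each component integral $\int_{\IId_\bk} f_\bk g_\bk \,\rd\bx$ with its shifted quadrature $I_{n_\bk}(f_\bk(\cdot+\bk)g_\bk(\cdot+\bk))$. Concretely, writing $F_\bk := f_\bk(\cdot+\bk)g_\bk(\cdot+\bk) \in \Wpmix$ (which lies in the space by \eqref{multipl-algebra1}), we have
\begin{equation*}
\left|I(f) - I_n(f)\right| \le \sum_{|\bk| \ge \xi_n} \left|\int_{\IId} F_\bk(\bx)\,\rd\bx\right| + \sum_{|\bk| < \xi_n} \left|\int_{\IId} F_\bk(\bx)\,\rd\bx - I_{n_\bk}(F_\bk)\right| =: S_1 + S_2.
\end{equation*}

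For the tail term $S_1$: since $\left|\int_{\IId} F_\bk\right| \le \|F_\bk\|_{L_1(\IId)} \le C\|F_\bk\|_{\Wpmix} \le C e^{-\delta|\bk|^2}\|f\|_{\Wpgamma}$ by the combination of \eqref{multipl-algebra2} and \eqref{[tau]<e^{-delta k}1}, I would bound
\begin{equation*}
S_1 \le C\|f\|_{\Wpgamma} \sum_{|\bk| \ge \xi_n} e^{-\delta|\bk|^2} \le C\|f\|_{\Wpgamma}\, e^{-\delta \xi_n^2/2} \sum_{\bk \in \ZZd} e^{-\delta|\bk|^2/2},
\end{equation*}
and by the definition \eqref{xi-int} of $\xi_n$, $e^{-\delta\xi_n^2/2} = e^{-a\log n} = n^{-a}$, so $S_1 \ll n^{-a}\|f\|_{\Wpgamma}$, which is even better than the claimed bound (no log factor). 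The counting over the shell $|\bk| \ge \xi_n$ via binomial coefficients, as already done for \eqref{<n2}, makes the geometric-type sum converge.

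For the main term $S_2$: apply the hypothesis \eqref{IntError-a,b} to each $F_\bk \in \Wpmix$ with $m = n_\bk$ (using the conventions $\Int_n := \Int_{\lfloor n\rfloor}$), giving
\begin{equation*}
\left|\int_{\IId} F_\bk(\bx)\,\rd\bx - I_{n_\bk}(F_\bk)\right| \le C\, n_\bk^{-a}(\log n_\bk)^b \|F_\bk\|_{\Wpmix} \le C\, n_\bk^{-a}(\log n_\bk)^b\, e^{-\delta|\bk|^2}\|f\|_{\Wpgamma}.
\end{equation*}
Now substitute $n_\bk = \varrho n\, e^{-\frac{\delta}{2a}|\bk|^2}$ from \eqref{n_bk}: then $n_\bk^{-a} = \varrho^{-a} n^{-a} e^{\frac{\delta}{2}|\bk|^2}$, which exactly cancels the decay $e^{-\delta|\bk|^2}$ up to $e^{-\delta|\bk|^2/2}$ remaining, and $(\log n_\bk)^b \le (\log(\varrho n))^b \ll (\log n)^b$ for $n$ large (note $n_\bk \le \varrho n$, and one must check $n_\bk \ge 1$ so the log is meaningful — but terms with $\lfloor n_\bk\rfloor = 0$ contribute nothing on the quadrature side and can be folded into $S_1$-type estimates). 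Thus
\begin{equation*}
S_2 \le C\, n^{-a}(\log n)^b \|f\|_{\Wpgamma} \sum_{|\bk| < \xi_n} e^{-\delta|\bk|^2/2} \ll n^{-a}(\log n)^b\|f\|_{\Wpgamma},
\end{equation*}
the sum being bounded by $\sum_{\bk\in\ZZd} e^{-\delta|\bk|^2/2} < \infty$. Combining $S_1$ and $S_2$ yields \eqref{IntError0}.

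The main obstacle, or rather the point demanding care, is the bookkeeping around small $n_\bk$: when $0 < n_\bk < 1$ we have $\lfloor n_\bk \rfloor = 0$ so $I_{n_\bk}(F_\bk) = 0$ and the ``error'' is just $\left|\int_{\IId} F_\bk\right|$, which is not controlled by $n_\bk^{-a}(\log n_\bk)^b$ (the log is negative or undefined). One handles this by observing that for such $\bk$ the bound $\left|\int_{\IId} F_\bk\right| \le C e^{-\delta|\bk|^2}\|f\|_{\Wpgamma}$ from the tail estimate already suffices, since these $\bk$ still satisfy $|\bk| < \xi_n$ hence $e^{-\delta|\bk|^2} \le 1$ but more usefully we only need the sum $\sum e^{-\delta|\bk|^2}$ to converge — actually we should be slightly more careful and note that $n_\bk < 1$ forces $e^{-\frac{\delta}{2a}|\bk|^2} < (\varrho n)^{-1}$, i.e., $e^{-\delta|\bk|^2} < (\varrho n)^{-2a}$, so these terms contribute $\ll n^{-2a}\|f\|_{\Wpgamma}$, negligible. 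Everything else is a routine matter of collecting the geometric sums, which converge precisely because the exponential decay $e^{-\delta|\bk|^2}$ of the weighted norms \eqref{[tau]<e^{-delta k}1} dominates the polynomial-in-$|\bk|$ correction coming from the quadrature error's dependence on $n_\bk^{-a}$.
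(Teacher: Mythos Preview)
Your proof is correct and follows essentially the same route as the paper's: split into the tail $|\bk|\ge\xi_n$ and the main part $|\bk|<\xi_n$, bound the main terms via \eqref{IntError-a,b} applied to $F_\bk$ with the choice \eqref{n_bk} producing the residual factor $e^{-\delta|\bk|^2/2}$, and control the tail by the exponential decay \eqref{[tau]<e^{-delta k}1} combined with the definition of $\xi_n$. The only cosmetic differences are that the paper bounds the tail integrals directly by H\"older's inequality (using the first expression in the max of \eqref{[tau]<e^{-delta k}1}) rather than via $\|F_\bk\|_{\Wpmix}$, and extracts $e^{-\delta(1-\varepsilon)\xi_n^2}$ with $\varepsilon\in(0,1/2)$ instead of your $e^{-\delta\xi_n^2/2}$; your explicit treatment of the borderline case $\lfloor n_\bk\rfloor=0$ is a point the paper glosses over.
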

\begin{proof} 
Let $f\in \Wpgamma$ and $m\in \RR_1$. 	For the  quadrature $I_m$ for functions on $\IId$ in the assumption, from \eqref{IntError-a,b} and \eqref{f_{bk}<}  we have 
	\begin{equation} \label{IntError1}
		\begin{aligned}
			&\bigg|\int_{\IId} f_\bk(\bx+\bk)g_{\bk}(\bx+\bk)\rd \bx- I_{m}(f_{\bk}(\cdot+\bk)g_{\bk}(\cdot+\bk))\bigg| 
			\ll  m^{-a} (\log m)^b e^{-\delta |\bk|^2}\|f\|_{\Wpgamma}.
		\end{aligned}
	\end{equation}
	From \eqref{Int_RRd} and \eqref{I_n^gamma}
it follows that
	\begin{align*}
		\bigg|\int_{\RRd}f(\bx) \gamma(\rd\bx) - I_n(f)\bigg| 
		&\leq \sum_{|\bk|< \xi_n} \bigg|\int_{\IId_\bk} f_{\bk}(\bx)g_{\bk}(\bx)\rd \bx - 
		I_{n_\bk}(f_{\bk}(\cdot+\bk)g_{\bk}(\cdot+\bk))\bigg| 
		\\ &
		+ \sum_{|\bk|\geq \xi_n}\bigg|\int_{\IId_\bk}f_\bk(\bx)g_\bk(\bx)\rd \bx\bigg|.
	\end{align*}
	For each term in the first sum by   \eqref{IntError1} we derive  the estimates
	\begin{align*}
		\bigg|\int_{\IId_\bk} f_{\bk}(\bx)g_{\bk}(\bx)\rd \bx 
		& - I_{n_\bk}(f_{\bk}(\cdot+\bk)g_{\bk}(\cdot+\bk))\bigg| 
		\\& = \bigg|\int_{\IId} f_\bk(\bx+\bk)g_{\bk}(\bx+\bk)\rd \bx- I_{n_\bk}(f_{\bk}(\cdot+\bk)g_{\bk}(\cdot+\bk))\bigg|
		\\
		& \ll  n_{\bk}^{-a} (\log n_{\bk})^b e^{-\delta |\bk|^2} \|f\|_{\Wpgamma}
		\\
		& \ll ( n e^{-\frac{\delta}{2a}|\bk|^2} )^{-a} (\log n)^b
		e^{-\delta |\bk|^2}\|f\|_{\Wpgamma}
		\\
		&=\,    n^{-a} (\log n)^be^{-\frac{|\bk|^2\delta}{2}}\|f\|_{\Wpgamma}.
	\end{align*}
	Hence,
	\begin{align*}
		\sum_{|\bk|< \xi_n} \bigg|\int_{\IId_\bk} f_{\bk}(\bx)g_{\bk}(\bx)\rd \bx 
		 - I_{n_\bk}(f_{\bk}(\cdot+\bk)g_{\bk}(\cdot+\bk))\bigg| 
		& \ll \sum_{|\bk|< \xi_n} n^{-a}  (\log n)^be^{-\frac{|\bk|^2\delta}{2}}\|f\|_{\Wpgamma}
		\\&
		\ll n^{-a}  (\log n)^b\|f\|_{\Wpgamma}.
	\end{align*}
	For each term in the second sum  we get by H\"older's inequality and \eqref{[tau]<e^{-delta k}1},
	\begin{align*}
		\bigg|\int_{\IId_\bk} f_\bk(\bx) g_{\bk}(\bx)\rd \bx \bigg| 
		&\leq \bigg(\int_{\IId_\bk}|f_\bk(\bx)|^p g_\bk(\bx)\rd \bx \bigg)^{\frac{1}{p}} \bigg(\int_{\IId_\bk}g_\bk(\bx) \rd \bx \bigg)^{1-\frac{1}{p}} 
		\\
		&\ll e^{-\frac{|\bk - (\sign \bk)/2 |^2}{2}(1-\frac{1}{p})}\|f\|_{\Wpgamma}
		\\
		&
		\ll e^{-\delta |\bk|^2}\|f\|_{\Wpgamma},
	\end{align*}
	which implies
\begin{equation}\label{eq-epsilon01}
	\begin{aligned}
		\sum_{|\bk|\geq \xi_n}\bigg|\int_{\IId_\bk}f_\bk(\bx)g_\bk(\bx)\rd \bx\bigg|
		&
		 \ll   \sum_{|\bk|\geq \xi_n}e^{-\delta |\bk|^2} \|f\|_{\Wpgamma}
	\\
	&  \leq 2^d\sum_{s=\lceil\xi_n\rceil}^\infty   e^{-s^2 \delta}\binom{s+d-1}{d-1}\|f\|_{\Wpgamma}
	\\
	&
	\leq 2^d e^{-\xi_n^2\delta(1-\varepsilon)}\sum_{s=\lceil\xi_n\rceil}^\infty   e^{-s^2 \varepsilon \delta}\binom{s+d-1}{d-1}\|f\|_{\Wpgamma}
		\\
	&
\ll e^{-\xi_n^2\delta(1-\varepsilon)}\sum_{s=0}^\infty   e^{-s \varepsilon \delta}\binom{s+d-1}{d-1}\|f\|_{\Wpgamma}
	\end{aligned}
\end{equation}
with $\varepsilon \in (0,1/2)$. 
Using \eqref{eq-auxilary-01} we get
\begin{equation}\label{eq-epsilon02}
	\begin{aligned}
		\sum_{|\bk|\geq \xi_n}\bigg|\int_{\IId_\bk}f_\bk(\bx)g_\bk(\bx)\rd \bx\bigg| 	&  \ll  e^{-2a(1-\varepsilon)\log n}\|f\|_{\Wpgamma}		
		\ll n^{-a}  (\log n)^b\|f\|_{\Wpgamma}.
\end{aligned}
\end{equation}
	Summing up,  we have proven \eqref{IntError0}. 
	\hfill
\end{proof}

Some important quadratures such as the Frolov  quadrature and the QMC quadrature based on Fibonacci lattice rules ($d=2$) are constructively designed for functions  on $\RRd$ with support contained in  the unit  $d$-cube or for $1$-periodic functions. To employ them for  constructing  a quadrature for functions on $\RRd$ we need to modify   those constructions.

Assume that  there is a quadrature $I_m$ of the form \eqref{I_m(f)} with the integration nodes
$\{\bx_1,\ldots,\bx_m\}\subset \brac{- \frac{1}{2}, \frac{1}{2}}^d$ and weights $(\lambda_1,\ldots,\lambda_m)$ such that  the convergence rate
\begin{equation}\label{IntError-a,b,F}
	\bigg|\int_{\IId} f(\bx) \rd \bx  - I_m(f)\bigg| \leq C m^{-a} (\log m)^b \|f\|_{\Wpmix}, 
	\ \  f\in \mathring{W}^\alpha_p(\IId)
\end{equation}
holds, where $ \mathring{W}^\alpha_p(\IId)$ denotes the space of functions in  $W^\alpha_p(\RRd)$ with support contained in $\IId$.
Then based on the quadrature $I_m$, we propose two constructions of  quadratures  which approximate the integral $\int_{\RRd}f(\bx) \gamma(\rd\bx)$ with the same convergence rate for 
$f \in \Wpgamma$.
   
The first method is a preliminary change of variables  to transform the quadrature $I_m$  into a quadrature for functions in $\Wpmix$  which gives the same convergence rate, and then apply the  construction as in \eqref{I_n^gamma2}. Let us describe it.
Let $k\in \NN$ and $\psi_k$  be the function defined by
\begin{equation}\label{psin}
	\psi_k(t) = \left\{\begin{array}{rcl}
		C_k\int_{0}^t  (\frac{1}{4}-\xi^2)^k\,\rd\xi, & t\in [-\frac{1}{2},\frac{1}{2}],\\[1ex]
		\frac{1}{2},& t>\frac{1}{2},\\[1ex]
		-\frac{1}{2} ,& t<-\frac{1}{2}\,,
	\end{array}\right.
\end{equation}
where $C_k=\big(\int_{-1/2}^{1/2} (\frac{1}{4}-\xi^2)^k \,\rd\xi\big)^{-1}$. Observe that $\psi_k$ is a one-to-one mapping on $[-\frac{1}{2},\frac{1}{2}]$ and $\psi_k'$ has compact support on $[-\frac{1}{2},\frac{1}{2}]$.
If $f \in \Wpmix$,  a change of variable yields that 
	$$
\int_{\IId} f(\bx) \rd \bx =\int_{\IId}  \big(T_{\psi_k} f\big)(\bx) \rd \bx,
 $$
 where
 $$
\big( T_{\psi_k} f\big)(\bx):=\psi_k'(x_1)\cdot\ldots\cdot \psi_k'(x_d) f\big(\psi_k(x_1),\ldots,\psi_k(x_d)\big),
\ \bx \in \IId. 
 $$
Observe that the function $T_{\psi_k} f$ has support contained in $\IId$. If  $T_{\psi_k} f$ belongs to $ \mathring{W}^\alpha_p(\IId)$,  then a quadrature with the integration nodes  $\{\tilde{\bx}_1,\ldots,\tilde{\bx}_m\}\subset \IId$ and weights $(\tilde{\lambda}_1,\ldots, \tilde{\lambda}_m)$ for the function $f$ can be defined  as
$$
\tilde{I}_m(f):= I_m(T_{\psi_k} f) = \sum_{j=1}^m \tilde{\lambda}_j f(\tilde{\bx}_j), 
$$ 
where $\tilde{\bx}_j=(\psi_k(x_{j,1}),\ldots,\psi_k(x_{j,d}))$ and  $\tilde{\lambda}_j=\lambda_j \psi_k'(x_{j,1})\cdot\ldots\cdot\psi_k'(x_{j,d})$.  Hence, our task is finding a condition on $k$ so that  the mapping
\begin{equation*}
\ f \mapsto  	T_{\psi_k} f 
\end{equation*}
is a bounded operator from $\Wpmix$ to $ \mathring{W}^\alpha_p(\IId)$. A first result was proved by Bykovskii \cite{By85} where he showed that $T_{\psi_k}$ is bounded in  $W^\alpha_2(\IId)$ if 
$k\geq 2\alpha +1$. 
This result has been extended by Temlyakov, see \cite[Theorem IV.4.1]{Tem93B}, to   $W^\alpha_p(\IId)$ under the condition $
	k\geq \big\lfloor \frac{\alpha p}{p-1}\big \rfloor+1\,.
$ A recent improvement $k>\alpha
+1$ was  obtained in \cite{NUU17}. 

The second method is to decompose  functions in $W^\alpha_p(\RRd,\gamma)$ into a sum of functions on $\RR^d$ having support contained in integer translations of the $d$-cube $\IId_\theta := \big[-\frac{\theta}{2}, \frac{\theta}{2}\big]$
for a fixed   $\theta>1 $. Then the quadrature for $W^\alpha_p(\RRd,\gamma)$ is the sum of integer-translated dilations of $I_m$. Details of this construction are presented below.

 First observe that 
\begin{align*} 
	\RRd = \bigcup_{\bk \in \ZZd}	\IId_{\theta,\bk},
\end{align*}	
where $\IId_{\theta,\bk}:= \IId_\theta + \bk$. It is  well-known 
that one can constructively define a partition of unity $\brab{\varphi_\bk}_{\bk \in \ZZd}$ such that
\begin{itemize}
	\item[\rm{(i)}] $\varphi_\bk \in C^\infty_0(\RRd)$ and 
	$0 \le \varphi_\bk (\bx)\le 1$, \ \ $\bx \in \RRd$, \ \ $\bk \in \ZZd$;
	\item[\rm{(ii)}]  $\supp \varphi_\bk$ are contained in the interior of  $\IId_{\theta,\bk}$, $\bk \in \ZZd$;
	\item[\rm{(iii)}]  $\sum_{\bk \in \ZZd}\varphi_\bk (\bx)= 1$, \ \ $\bx \in \RRd$;
\item[\rm{(iv)}]  $\norm{\varphi_\bk }{W^\alpha_p(\IId_{\theta,\bk})} \le C_{\alpha,d,\theta}$, \ \ 
$\bk \in \ZZd$,
\end{itemize}
(see, e.g., \cite[Chapter VI, 1.3]{Stein1970}).  By the items (ii) and (iii)
the integral 
	$\int_{\RRd}f(\bx) \gamma(\rd\bx)$ can be represented as
\begin{align} \label{Int_RRd-F}
	\int_{\RRd}f(\bx) \gamma(\rd\bx) 
	=  \sum_{\bk \in \ZZd}\int_{\IId_{\theta,\bk}} f_{\theta,\bk}(\bx) g_{\theta,\bk}(\bx) \varphi_\bk (\bx) \rd \bx,
\end{align}
where  $f_{\theta,\bk}$ and $g_{\theta,\bk}$ denote the restrictions of $f$ and $g$ on $\IId_{\theta,\bk}$, respectively.
The quadrature \eqref{I_m(f)} induces the quadrature 
\begin{equation}\label{I_theta,m(f)}
	I_{\theta,m}(f): = \sum_{i=1}^m \lambda_{\theta,i} f(\bx_{\theta,i}), 
\end{equation}
for functions $f$ on $\IId_\theta$,  where 
$\bx_{\theta,i}:= \theta\bx_i$ and 
$\lambda_{\theta,i}:= \theta \lambda_i $.

Denote by $ \mathring{W}^\alpha_p(\IId_\theta)$ the subspace of functions in $W^\alpha_p(\RRd)$ with support contained in $\IId_\theta$.
From  \eqref{IntError-a,b,F} the error bound
\begin{equation*}\label{IntError-a,b,theta}
	\bigg|\int_{\IId_\theta} f(\bx) \rd \bx  - I_{\theta,m}(f)\bigg| \ll m^{-a} (\log m)^b \|f\|_{W^\alpha_p(\IId_\theta)}
\end{equation*}
holds for every $f\in \mathring{W}^\alpha_p(\IId_\theta)$.
Let $f\in \Wpgamma$. It is clear that 
$f_{\theta,\bk}(\cdot+\bk)\in W^\alpha_p(\IId_\theta)$ and similar to \eqref{eq:b4} and \eqref{eq:norm-fwid} we get
\begin{equation*}\label{ineq-norms}
	\|f_{\theta,\bk}(\cdot+\bk)\|_{W^\alpha_p(\IId_\theta)} 
	\ll e^{\frac{|\bk + (\theta \sign \bk)/2 |^2}{2p}}\|f\|_{\Wpgamma}, \ \
	f\in \Wpgamma, \ \ \bk \in \ZZd. 
\end{equation*}
Similarly to \eqref{multipl-algebra1} and \eqref{multipl-algebra2}, by additionally using the items (ii)  and (iv)  we have that 
\begin{equation*} 
	f_{\theta,\bk}(\cdot+\bk)g_{\theta,\bk}(\cdot+\bk)\varphi_\bk (\cdot+\bk)\in \mathring{W}^\alpha_p(\IId_\theta),
\end{equation*}
and 
\begin{align*}
	\|f_{\theta,\bk}(\cdot+\bk)g_{\theta,\bk}(\cdot+\bk)\varphi_\bk(\cdot+\bk)\|_{W^\alpha_p(\IId_\theta)} 
	\ll 
e^{\frac{|\bk + (\theta \sign \bk)/2 |^2}{2p}-\frac{|\bk|^2}{2\tau}} \|f\|_{\Wpgamma},
\end{align*}
where $\tau$ is a fixed number satisfying the inequalities $1<\tau<p<\infty $.
We choose $\delta>0$ so that
\begin{equation*}  \label{[tau]<e^{-delta k}}
\max \bigg\{e^{-\frac{|\bk - (\theta \sign \bk)/2 |^2}{2}(1-\frac{1}{p})}, 
	 e^{\frac{|\bk + (\theta \sign \bk)/2 |^2}{2p}-\frac{|\bk|^2}{2\tau}}\bigg\}
	\leq 
	C e^{-\delta |\bk|^2}, \ \  \bk\in \ZZd.
\end{equation*}

For $n\in \RR_1$, let $\xi_n$ and $n_{\bk}$ be given as in \eqref{xi-int}	
and \eqref{n_bk}, respectively. 
Noting \eqref{Int_RRd-F} and \eqref{I_theta,m(f)}, we define
\begin{equation*} 
I_{\theta,n}(f):=\sum_{|\bk|< \xi_n}I_{\theta,n_\bk}
	\big(f_{\theta,\bk}(\cdot+\bk)g_{\theta,\bk}(\cdot+\bk)\varphi_\bk(\cdot+\bk)\big),
\end{equation*}
or equivalently,
\begin{equation} \label{I_n^gamma3}
I_{\theta,n}(f):=
		\sum_{|\bk|< \xi_n}\sum_{j=1}^{\lfloor n_\bk \rfloor} \lambda_{\theta,\bk,j} f(\bx_{\theta,\bk,j})
\end{equation}
as a linear quadrature for the approximate integration of $\gamma$-weighted functions $f$ on $\RRd$
where $\bx_{\theta,\bk,j}:= \bx_{\theta,j}+\bk$ and 
$\lambda_{\theta, \bk,j}:= \lambda_{\theta,j} g_{\bk}(\bx_{\theta,\bk,j})\varphi_\bk(\bx_{\theta,\bk,j})$.   The integration nodes of the quadrature  $I_{\theta,n}$ are
\begin{equation} \label{int-nodes-theta}
\{\bx_{\theta,\bk,j}: |\bk|< \xi_n, \, j=1,\ldots,\lfloor n_\bk \rfloor\}
\subset \RRd,
\end{equation}
and the  weights 
$$
(\lambda_{\theta,\bk,j}: |\bk|< \xi_n, \, j=1,\ldots,\lfloor n_\bk \rfloor).
$$
Due to \eqref{<n2}, the number of integration nodes  is not greater than $n$. Moreover, from the definition we can see that the integration nodes are contained in the ball of radius $\xi_{\theta,n}^*:= \theta\sqrt{d}/2 + \xi_n$, i.e., 
$$
\{\bx_{\theta,\bk,j}: |\bk|< \xi_{\theta,n}^*, \, j=1,\ldots,\lfloor n_\bk \rfloor\}
\subset B(\xi_{\theta,n}^*):= \brab{\bx \in \RRd:\, |\bx| \le \xi_{\theta,n}^*}.$$
Notice  that the set of integration nodes \eqref{int-nodes-theta} possesses  similar sparsity properties as  the set \eqref{int-nodes}.

In a way similar to the proof of Theorem \ref{thm:int-general} we derive

\begin{theorem} \label{thm:int-general2}
	Let $\alpha\in \NN$, $1<p<\infty$ and $a >0$, $b \ge 0$, $ \theta>1$. 
	Assume that for any $m \in \RR_1$, there is  a   quadrature  $I_m$  of the form \eqref{I_m(f)} with 
	$\{\bx_1,\ldots,\bx_m\}\subset \brac{- \frac{1}{2}, \frac{1}{2}}^d$ satisfying \eqref{IntError-a,b,F}. Then for  the   quadrature  $I_{\theta,n}$ defined as in \eqref{I_n^gamma3} we have 
	\begin{equation} 	\label{IntError}
		\bigg|\int_{\RRd}f(\bx) \gamma(\rd\bx) - 	I_{\theta,n}(f)\bigg| 
		\ll 
		n^{-a}  (\log n)^b \|f\|_{\Wpgamma}, 
		\ \  f \in \Wpgamma.
	\end{equation}
\end{theorem}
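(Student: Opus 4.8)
The plan is to follow closely the proof of Theorem~\ref{thm:int-general}, adapting it to the partition-of-unity decomposition \eqref{Int_RRd-F} in place of the clean decomposition \eqref{Int_RRd}. First I would fix $f \in \Wpgamma$ and $m \in \RR_1$, and apply the induced quadrature error estimate for $\mathring{W}^\alpha_p(\IId_\theta)$ to the shifted, dilated integrand $f_{\theta,\bk}(\cdot+\bk)g_{\theta,\bk}(\cdot+\bk)\varphi_\bk(\cdot+\bk)$. Combining this with the product-norm bound established just before the statement (which in turn relies on the multiplication-algebra property of $\Wpmix$, the pointwise Gaussian decay of $D^\br g$ on $\IId_{\theta,\bk}$, and property (iv) of the partition of unity), one gets the analogue of \eqref{IntError1}:
\begin{equation*}
	\bigg|\int_{\IId_\theta} f_{\theta,\bk}(\bx+\bk)g_{\theta,\bk}(\bx+\bk)\varphi_\bk(\bx+\bk)\rd \bx - I_{\theta,m}\big(f_{\theta,\bk}(\cdot+\bk)g_{\theta,\bk}(\cdot+\bk)\varphi_\bk(\cdot+\bk)\big)\bigg| \ll m^{-a}(\log m)^b e^{-\delta|\bk|^2}\|f\|_{\Wpgamma}.
\end{equation*}

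Next, using \eqref{Int_RRd-F} and the definition \eqref{I_n^gamma3}, I would split the total error into a ``near'' sum over $|\bk| < \xi_n$ and a ``tail'' sum over $|\bk| \ge \xi_n$. For the near sum, substitute $m = n_\bk = \varrho n e^{-\frac{\delta}{2a}|\bk|^2}$ into the displayed estimate above; exactly as in the proof of Theorem~\ref{thm:int-general}, the factors $(n e^{-\frac{\delta}{2a}|\bk|^2})^{-a}$ and $e^{-\delta|\bk|^2}$ combine to $n^{-a}(\log n)^b e^{-\frac{\delta|\bk|^2}{2}}$, and summing the Gaussian-type series over $\bk \in \ZZd$ (via \eqref{eq-auxilary-01}, say) contributes only a constant, giving $\ll n^{-a}(\log n)^b\|f\|_{\Wpgamma}$. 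For the tail sum, H\"older's inequality on each $\IId_{\theta,\bk}$ together with $0 \le \varphi_\bk \le 1$ and the first inequality in the displayed choice of $\delta$ (the one with $\theta\sign\bk$) bounds $\big|\int_{\IId_{\theta,\bk}} f_{\theta,\bk}g_{\theta,\bk}\varphi_\bk\,\rd\bx\big| \ll e^{-\delta|\bk|^2}\|f\|_{\Wpgamma}$; then the computation \eqref{eq-epsilon01}--\eqref{eq-epsilon02}, extracting a factor $e^{-\xi_n^2\delta(1-\varepsilon)} = n^{-2a(1-\varepsilon)}$ for small $\varepsilon$, shows this is also $\ll n^{-a}(\log n)^b\|f\|_{\Wpgamma}$.

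Summing the two contributions yields the claimed bound. The only genuinely new ingredient compared with Theorem~\ref{thm:int-general} is the bookkeeping around the partition of unity $\{\varphi_\bk\}$: one must verify that inserting $\varphi_\bk$ does not destroy the product-norm estimate and that $f_{\theta,\bk}(\cdot+\bk)g_{\theta,\bk}(\cdot+\bk)\varphi_\bk(\cdot+\bk)$ genuinely lies in $\mathring{W}^\alpha_p(\IId_\theta)$ (support in the interior by property (ii), norm control by property (iv) and the multiplication-algebra bound) — this is precisely what was recorded in the two displays immediately preceding the statement, so the argument reduces to assembling those facts. I expect this step — confirming the support containment and the uniform-in-$\bk$ norm bound for the triple product, so that the hypothesis \eqref{IntError-a,b,F} applies with $I_{\theta,n_\bk}$ — to be the only place requiring care; everything after that is a verbatim repetition of the estimates in the proof of Theorem~\ref{thm:int-general} with $\IId$, $\IId_\bk$ replaced by $\IId_\theta$, $\IId_{\theta,\bk}$ and the extra bounded factor $\varphi_\bk$ carried along harmlessly.
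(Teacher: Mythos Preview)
Your proposal is correct and is exactly what the paper intends: the paper's own ``proof'' of Theorem~\ref{thm:int-general2} consists solely of the sentence ``In a way similar to the proof of Theorem~\ref{thm:int-general} we derive,'' and your write-up supplies precisely those details, with the partition-of-unity factors $\varphi_\bk$ handled via properties (ii) and (iv) as recorded in the displays preceding the statement.
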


As noticed in Introduction, we do not study the dimension dependence for error estimates of integration. Hence the hidden constant in the bound \eqref{IntError} may depend  on the dimension $d$ and may increase exponentially in $d$. Therefore, for very large $d$, the resulting algorithm may not be practical.

\subsection{ Asymptotic order of optimal numerical integration}\label{sec-optimal}
In this subsection, we prove the asymptotic order of optimal numerical integration as formulated in \eqref{AsympQuadrature} based on Theorem \ref{thm:int-general2} and known results on  numerical integration for functions from $\Wpmix$.

\begin{theorem} \label{thm:main}
		Let $\alpha\in \NN$ and $1<p<\infty$.  Then one can construct  an asymptotically optimal  family of quadratures  of the form  \eqref{I_n^gamma3} $\big(I_n^\gamma\big)_{n \in \RR_1}$ such that
	\begin{equation}\label{eq:InWg}
		\sup_{f\in \BWpgamma} \bigg|\int_{\RRd}f(\bx) \gamma(\rd\bx) - I_n^\gamma(f)\bigg| 
	\asymp
	\Int_n\big(\BWpgamma\big) 
	\asymp 
	n^{-\alpha} (\log n)^{\frac{d-1}{2}}.
	\end{equation}
\end{theorem}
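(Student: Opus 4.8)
The plan is to obtain the two-sided estimate in \eqref{eq:InWg} by combining the collaging construction of Theorem \ref{thm:int-general2} (for the upper bound) with a standard lifting/reduction argument (for the lower bound), using as input the known asymptotically optimal quadratures on $\IId$ for the classical mixed-smoothness Sobolev space. Concretely, for the upper bound I would invoke a result of Frolov/Temlyakov-type: for $\alpha\in\NN$ and $1<p<\infty$ there exists a quadrature $I_m$ of the form \eqref{I_m(f)} with nodes in $\big(-\frac12,\frac12\big)^d$ such that
\begin{equation*}
	\Big|\int_{\IId} f(\bx)\,\rd\bx - I_m(f)\Big| \ll m^{-\alpha}(\log m)^{\frac{d-1}{2}}\,\|f\|_{\RWpmix},\qquad f\in\RWpmix,
\end{equation*}
i.e., \eqref{IntError-a,b,F} holds with $a=\alpha$ and $b=\frac{d-1}{2}$. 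This is precisely the regime where the sharp exponent $(d-1)/2$ of the logarithm is known (the Frolov cubature formula, or Fibonacci cubature when $d=2$, gives the matching upper bound, and the lower bound $\Int_m(\RWpmix)\gg m^{-\alpha}(\log m)^{\frac{d-1}{2}}$ is classical). Feeding this $I_m$ into Theorem \ref{thm:int-general2} with some fixed $\theta\in(1,2)$ produces the quadrature $I_n^\gamma := I_{\theta,n}$ of the form \eqref{I_n^gamma3}, using at most $n$ nodes, with
\begin{equation*}
	\sup_{f\in\BWpgamma}\Big|\int_{\RRd} f(\bx)\,\gamma(\rd\bx) - I_n^\gamma(f)\Big| \ll n^{-\alpha}(\log n)^{\frac{d-1}{2}},
\end{equation*}
which already gives $\Int_n(\BWpgamma)\ll n^{-\alpha}(\log n)^{\frac{d-1}{2}}$ and the upper half of the first $\asymp$.

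For the matching lower bound $\Int_n(\BWpgamma)\gg n^{-\alpha}(\log n)^{\frac{d-1}{2}}$, the idea is to embed a scaled copy of the hard instance for $\RWpmix$ into $\Wpgamma$. Pick a cube $Q=\bc+[0,\eta]^d$ (for suitable fixed center $\bc$ and side $\eta$) on which the Gaussian density $g$ is bounded above and below by positive constants; then for $f\in\RWpmix$ supported in $\IId$, the rescaled-and-translated function $\tilde f(\bx):=f\big((\bx-\bc)/\eta\big)$ is supported in $Q$, lies in $\Wpgamma$, and satisfies $\|\tilde f\|_{\Wpgamma}\ll \|\tilde f\|_{\Wpmix(Q)} \asymp \eta^{d/p-\alpha(\text{something})}\|f\|_{\RWpmix}$ — more precisely the rescaling multiplies each derivative norm by a fixed power of $\eta$, so up to a constant depending only on $\alpha,d,p,\eta$ we get $\|\tilde f\|_{\Wpgamma}\ll \|f\|_{\RWpmix}$. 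Moreover $\int_{\RRd}\tilde f\,\gamma(\rd\bx) = \int_Q f\big((\bx-\bc)/\eta\big)g(\bx)\,\rd\bx$, which after the change of variables becomes $\eta^d\int_{\IId} f(\by)\,g(\bc+\eta\by)\,\rd\by$; since $g(\bc+\eta\by)$ is itself a fixed smooth (analytic) function on $\IId$ bounded away from $0$, any quadrature $I_n$ on $\RRd$, restricted to such test functions, yields a quadrature on $\IId$ for the weighted integral $\int_{\IId} f(\by) w(\by)\,\rd\by$ with a smooth positive weight $w$. A standard argument (the weight $w$ is a multiplier on $\RWpmix$, so weighted and unweighted optimal integration errors on $\IId$ have the same asymptotic order) then transfers the classical lower bound $\Int_n^{\IId}(\RWpmix)\gg n^{-\alpha}(\log n)^{\frac{d-1}{2}}$ to give $\Int_n(\BWpgamma)\gg n^{-\alpha}(\log n)^{\frac{d-1}{2}}$.

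Combining the two directions yields $\Int_n(\BWpgamma)\asymp n^{-\alpha}(\log n)^{\frac{d-1}{2}}$, and since the constructed $I_n^\gamma$ achieves the upper bound, its worst-case error is $\asymp \Int_n(\BWpgamma)$; this establishes \eqref{eq:InWg}. I expect the main obstacle to be the lower bound: one must be careful that the reduction to $\IId$ genuinely produces a \emph{bona fide} quadrature (finitely many nodes, same count $n$) for a weighted integral on $\IId$, and that the smooth positive Gaussian weight does not destroy the logarithmic power — this is where citing the right version of the classical lower bound for weighted mixed-smoothness integration (or re-deriving it via the usual Fourier-analytic/bump-function construction adapted to a weighted integral) is essential. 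The upper bound, by contrast, is essentially bookkeeping once the correct $\IId$-quadrature with exponent $b=(d-1)/2$ is cited and plugged into Theorem \ref{thm:int-general2}.
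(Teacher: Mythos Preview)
Your approach matches the paper's: the upper bound comes from feeding the Frolov quadrature (which satisfies \eqref{IntError-a,b,F} with $a=\alpha$, $b=(d-1)/2$) into Theorem~\ref{thm:int-general2}, and the lower bound from embedding $\RWpmix$ into $\Wpgamma$ and invoking the classical lower bound $\Int_n(\BRWpmix)\gg n^{-\alpha}(\log n)^{(d-1)/2}$. The paper's lower-bound reduction is simpler than yours --- no rescaling or translation is needed, since on $\IId$ itself the density $g$ is already bounded above and below by positive constants, giving $\|f\|_{\Wpgamma}\le(2\pi)^{-d/(2p)}\|f\|_{\RWpmix}$ for $f\in\RWpmix$ directly; the smooth-weight issue you correctly flag (Gaussian-weighted versus unweighted integral on $\IId$) is left implicit in the paper but is handled exactly by the multiplier argument you sketch, since $g$ and $1/g$ are smooth on a neighborhood of $\IId$ and hence multipliers on $\RWpmix$.
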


\begin{proof} 
Let $I_{{\rm F},m}$ be the  Frolov quadrature for functions in $\mathring{W}^\alpha_p(\IId)$ (see, e.g.,  \cite[Chapter 8]{DTU18B}  for the definition)    in
the form \eqref{I_m(f)} with 
$\{\bx_1,\ldots,\bx_m\}\subset \brac{- \frac{1}{2}, \frac{1}{2}}^d$. 
 It was proven   in \cite{Florov1976} for $p=2$, and in \cite{Skriganov1994} for $1 < p < \infty$  that
 \begin{equation}\label{OptIntegration}
 	\bigg|\int_{\IId} f(\bx) \rd \bx  - I_{{\rm F},m}(f)\bigg| \leq C m^{-\alpha} (\log m)^{\frac{d-1}{2}} \|f\|_{\mathring{W}^\alpha_p(\IId)}, 
 	\ \  f\in \mathring{W}^\alpha_p(\IId).
 \end{equation}		
For a fixed $ \theta>1$, we define $I_n^\gamma:=  I_{\theta,n}$  as the quadrature described in  Theorem \ref{thm:int-general2} for $a= \alpha$ and $b = {\frac{d-1}{2}}$, based on  $I_m = {I_{{\rm F},m}}$. By 
Theorem \ref{thm:int-general2} and \eqref{OptIntegration} we prove the upper bound in \eqref{eq:InWg}.
	
	Since for $f\in \mathring{W}^\alpha_p(\IId)$
	$$
	\|f\|_{\Wpgamma} \leq (2\pi)^{-\frac{d}{2p}} \|f\|_{\mathring{W}^\alpha_p(\IId)},
	$$
	we get
	\begin{align*}  
		\Int_n\big(\BWpgamma\big) \gg \Int_n(\mathring{\boldsymbol{W}}^\alpha_p(\IId)).
	\end{align*}	
Hence the lower bound in \eqref{eq:InWg} follows from the lower bound 
$\Int_n(\mathring{\boldsymbol{W}}^\alpha_p(\IId)) \gg  n^{-\alpha} (\log n)^{\frac{d-1}{2}}$ proven in \cite{Tem1990}. 
%
	\hfill
\end{proof}

Besides Frolov quadratures, there are many  quadratures for efficient numerical integration for functions on $\IId$ to list. We refer the reader to \cite[Chapter 8]{DTU18B} for  bibliography  and historical comments as well as related results, in particular,
the asymptotic order
\begin{equation*}\label{IntW(IId)}
\Int_m\big(\BWpmix\big) \asymp m^{-\alpha} (\log m)^{\frac{d-1}{2}}
\end{equation*}
for $1 < p < \infty$.
We recall only some of them, especially those which give asymptotic order of optimal integration.

A quasi-Monte Carlo (QMC) quadrature based on a set of integration nodes $\{\bx_1,\ldots,\bx_m\}\subset \IId$  is of the form
\begin{equation*}\label{I_m(f)-QMC}
I_m(f) = \frac{1}{m}\sum_{i=1}^m f(\bx_i). 
\end{equation*}
In  \cite{Dick2007, Dick2008} for a prime number $q$  the author introduced higher order digital nets over the finite field 
$\FF_q:=\brab{0, 1, \ldots, q-1}$ equipped with the arithmetic operations modulo $q$.  Such digital nets can achieve the convergence rate $m^{-\alpha}(\log m)^{d \alpha}$ with $m = q^s$  for functions from $W^\alpha_2(\IId)$, see \cite{DP2010}.
In the recent paper \cite{GSY2018},  the authors have shown that  the asymptotic order  of
$\Int_m\big(\bW^\alpha_2(\IId)\big)$  can be achieved  by Dick's digital nets $\{\bx_1^*,\ldots,\bx_{q^s}^*\}$ of order $(2\alpha + 1)$. Namely, they proved that 
\begin{equation}\label{OptQMC}
	\bigg|\int_{\IId} f(\bx) \rd \bx  -  \frac{1}{m} \sum_{i=1}^m f(\bx_i^*)\bigg| \leq C 
	m^{-\alpha} (\log m)^{\frac{d-1}{2}} \|f\|_{W^\alpha_2(\IId)}, 
	\ \  f\in \ W^\alpha_2(\IId), \ \ m=q^s.
\end{equation}		

In the case $d=2$ the QMC quadrature $I_m=I_{\Phi,m}$ based on Fibonacci lattice rules ($d=2$)  is also  asymptotically optimal for numerical integration of  periodic functions in   $\tilde{W}^\alpha_p(\II^2)$, that is,
\begin{equation}\label{OptInt-Fibonacci}
	\bigg|\int_{\II^2} f(\bx) \rd \bx  - I_{\Phi,m}(f)\bigg| \leq C m^{-\alpha} (\log m)^{\frac{1}{2}} \|f\|_{W^\alpha_p(\II^2)}, 
	\ \  f\in \tilde{W}^\alpha_p(\II^2),
\end{equation}
where $\tilde{W}^\alpha_p(\II^2)$ denotes the subspace of $W^\alpha_p(\II^2)$ of all functions which can be extended to the whole $\RR^2$ as $1$-periodic  functions in each variable. The estimate \eqref{OptInt-Fibonacci} was proven in \cite{Bakhvalov1963} for $p=2$ and in \cite{Tem1991}
 for $1 < p < \infty$.
The QMC quadrature $I_m=I_{\Phi,m}$ based on Fibonacci lattice rules ($d=2$)  is defined by
\begin{equation*}\label{Phi_m(f)}
I_{\Phi,m}(f): = \frac{1}{b_m} \sum_{i=1}^{b_m} f\bigg(\Big\{\frac{i}{b_m}\Big\} - \frac{1}{2}, \Big\{\frac{ib_{m-1}}{b_m}\Big\} - \frac{1}{2}\bigg),
\end{equation*}
where $b_0 = b_1 = 1$,  $b_m := b_{m-1} + b_{m-2}$ are the Fibonacci numbers and $\brab{x}$ denotes the fractional part of the number $x$.

Therefore,  from Theorems \ref{thm:int-general}--\ref{thm:main} and  \eqref{OptQMC}, \eqref{OptInt-Fibonacci} it follows that the QMC quadratures based on Dick's digital nets  of order $(2\alpha + 1)$ and Fibonacci lattice rules ($d=2$)  can be used for  assembling  asymptotically optimal quadratures  $I_n^\gamma$ and $I_{\theta,n}^\gamma$  of the forms \eqref{I_n^gamma2} and \eqref{I_n^gamma3} for 
$\Int_n\big(\BWpgamma\big)$, in the particular cases $p=2$, $d \ge 2$, and $1 < p < \infty$, $d=2$, respectively.

The sparse Smolyak grid $SG(\xi)$ in $\IId$ is defined as  the set of points:
\begin{equation*}\label{Smolyak-net}
	SG(\xi):= \brab{\bx_{\bk,\bs}:= 2^{-\bk}\bs \in \ZZd: \, |\bk|_1 \le \xi, \ \ | s_i| \le 2^{k_i - 1}, \ i=1,\ldots,d}, 
	\ \ \xi  \in \RR_1. 
\end{equation*}
For a given $m \in \RR_1$, let $\xi_m$ be the maximal number satisfying $|SG(\xi_m)| \le m$. Then we can constructively define a quadrature $I_m = {I_{{\rm S},m}}$ based on the integration nodes in $SG(\xi_m)$ so that
\begin{equation}\label{OptInt-Smolyak}
	\bigg|\int_{\IId} f(\bx) \rd \bx  - {I_{{\rm S},m}}(f)\bigg| \leq C m^{-\alpha} (\log m)^{(d-1)(\alpha + 1/2)} \|f\|_{\Wpmix}, 
	\ \  f\in W^\alpha_p(\IId).
\end{equation}
To understand this quadrature let us recall  a detailed construction from \cite[page 760]{DU2015}.
Indeed, from the well-known embedding of $\Wpmix$ into the Besov space  of mixed smoothness $B^\alpha_{p,\max(p,2)}(\IId)$  (see, e.g., \cite[Lemma 3.4.1(iv)]{DTU18B}), and the result on B-spline sampling  recovery of functions from the last space   it follows  that one can constructively define a sampling recovery algorithm of the form 
\begin{equation*}\label{R_m(f)}
	R_m(f): = \sum_{\bx_{\bk,\bs}\in SG(\xi_m)} f(\bx_{\bk,\bs}) \phi_{\bk,\bs}
\end{equation*}
with certain B-splines $\phi_{\bk,\bs}$, such that
\begin{equation*}\label{Sampling-Smolyak}
	\norm{f - R_m(f)}{L_1(\IId)}	 \leq C m^{-\alpha} (\log m)^{(d-1)(\alpha + 1/2)} \|f\|_{\Wpmix}, 
	\ \  f\in W^\alpha_p(\IId).
\end{equation*}
Then the quadrature ${I_{{\rm S},m}}$ can be defined as
\begin{equation*}\label{S_m(f)}
{I_{{\rm S},m}}(f): = \sum_{\bx_{\bk,\bs} \in SG(\xi_m)} \lambda_{\bk,\bs} f(\bx_{\bk,\bs}) ,  \ \ 
	\lambda_{\bk,\bs}:= \int_{\IId} \phi_{\bk,\bs}(\bx) \rd \bx,
\end{equation*} 
and \eqref{OptInt-Smolyak} is implied by the obvious inequality 
$\big|\int_{\IId} f(\bx) \rd \bx  - I_{{\rm S},m}(f)\big| \le \norm{f - R_m(f)}{L_1(\IId)}$.
Therefore,  from Theorem \ref{thm:int-general} and  \eqref{OptInt-Smolyak} we can see that the  Smolyak quadrature ${I_{{\rm S},m}}$  can be used for  assembling  a  quadrature  
	${I_{{\rm S},n}}$ of the form \eqref{I_n^gamma2} with ``double" sparse integration nodes which gives the convergence rate
	\begin{equation*}
\bigg|\int_{\RRd}f(\bx) \gamma(\rd\bx) - {I_{{\rm S},n}}(f)\bigg| 
	\ll
	n^{-\alpha}  (\log n)^{(d-1)(\alpha + 1/2)}, \ \ 	f \in \BWpgamma.
\end{equation*}

\section{Approximation}
\label{Approximation}
In this section we study the 
linear  approximation and sampling recovery in $L_q(\RRd,\gamma)$ of  functions from $\Wpgamma$, and the asymptotic optimality in terms 
of Kolmogorov $n$-widths and the linear $n$-widths and sampling $n$-widths for $1\le q < p <\infty$ and $p=q=2$. 

 Let $n \in \NN$ and 
let $X$ be a Banach space and $F$ a central symmetric compact set in $X$.
Then the Kolmogorov $n$-width  of $F$ is defined by
\begin{equation*}
d_n(F,X):= \inf_{L_{n}}\sup_{f\in F}\inf_{g\in L_n}\|f-g\|_X, 
\end{equation*}
where the left-most  infimum is  taken over all  subspaces $L_{n}$ of dimension  $\le n$ in $X$.  
The linear $n$-width of the set $F$ is defined by
$$
\lambda_n(F,X):=\inf_{A_n} \sup_{f\in F} \|f-A_n(f)\|_X,
$$
where the infimum is taken over all linear operators $A_n$ in  $X$ with ${\rm rank}\, A_n\leq n$. Notice that  if $X$ is a Hilbert space, then $\lambda_n(F,X) = d_n(F,X)$.

Let  $\Omega$ be a domain in  $\RRd$.  Let $n \in \NN$ and let
 $X$ be a Banach space of functions on $\Omega$ and $F$ a compact set in $X$.
Given $\{\bx_i\}_{i=1}^n\subset \Omega$, to approximately recover $f \in F$  from the sampled values $\brab{f(\bx_i)}_{i=1}^n$ we use a (linear) sampling algorithm defined by
\begin{equation} \label{R_n(f)}
	R_n(f): = \sum_{i=1}^n  f(\bx_i) \varphi_i,
\end{equation}
where $\brab{\varphi_i}_{i=1}^n$ is a collection of $n$ functions in $X$. For convenience, we assume that some points from $\{\bx_i\}_{i=1}^n\subset \Omega$ and some functions from $\brab{\varphi_i}_{i=1}^n$ may coincide.
For $n\in \NN$ we define the sampling $n$-width  of the set $F$ in $X$ as
$$
\varrho_n(F,X):=\inf_{\bx_1,\ldots,\bx_n\in \Omega,\atop
	\varphi_1,\ldots,\varphi_n\in X} \ \sup_{f\in F}
\|f- R_n(f)\|_X,
$$
where $R_n(f)$ is given by \eqref{R_n(f)}. Obviously, we have the inequalities
\begin{equation}\label{eq-relations}
d_n(F,X) \leq \lambda_n(F,X)\leq \varrho_n(F,X).
\end{equation}

There are other popular $n$-widths in approximation theory like the entropy $n$-widths, Gel'fand $n$-widths and Bernstein $n$-widths, etc. In particular, for optimality of numerical algorithms, the Gel'fand $n$-widths are very important, since optimal algorithms could be non-linear (for detail, see, e.g., \cite[Section 6 and Section 9.6]{DTU18B}). However, these $n$-widths are not in the scope of consideration of the present paper.

For technical  convenience we use the conventions $A_n := A_{\lfloor n \rfloor}$, $R_n := R_{\lfloor n \rfloor}$, 
$d_n(F,X) := d_{\lfloor n \rfloor}(F,X)$, $\lambda_n(F,X) := \lambda_{\lfloor n \rfloor}(F,X)$ and  $\varrho_n(F,X) := \varrho_{\lfloor n \rfloor}(F,X)$ for $n \in \RR_1$.

For given $\alpha$ and $p,q$, we  make use of the abbreviations:
$$
\lambda_n := \lambda_n(\BWpgamma,\Lqgamma), \quad d_n := d_n(\BWpgamma,\Lqgamma) , 
$$
$$
\varrho_n := \varrho_n(\BWpgamma,\Lqgamma).
$$
We prove the asymptotic orders  of $\lambda_n$, $d_n$ and $\varrho_n$ as well as constructively define asymptotically optimal linear approximation methods which are very different for  the cases $1\le q < p <\infty$ and $q = p =2$.

\subsection{The case $1\le q < p < \infty$}
Let $\alpha\in \NN$, $1 \le q < p < \infty$ and $a >0$, $b \ge 0$. Denote by $\tilde{L}_q(\IId)$ and $\tilde{W}^\alpha_p(\IId)$ the subspaces of  $L_q(\IId) $ and $\Wpmix$, respectively,  of all functions $f$ which can be extended to the whole $\RRd$ as $1$-periodic  functions in each variable (denoted again by $f$).
 Let $A_m$  be a linear operator  in $\tilde{L}_q(\IId)$ of rank $\leq m$. Assume it holds that
\begin{equation}\label{A_m-Error-a,b-theta}
	\| f - A_m(f) \|_{\tilde{L}_q(\IId)}\leq C m^{-a} (\log m)^b \|f\|_{\tilde{W}^\alpha_p(\IId)}, 
	\ \  f\in \tilde{W}^\alpha_p(\IId).
\end{equation}
Then based on  $A_m$, we will construct  a linear operator  
$A_m^\gamma$ in $L_q(\RRd, \gamma)$ which approximates  $f \in \Wpgamma$  with the same convergence rate. Our strategy is similar to the problem of numerical  integration considered  in 
Subsection \ref{Quadratures on sparse-digital-nets }.  

Fix a number $\theta$ with   $\theta>1$.  
Denote by $\tilde{L}_q(\IId_\theta)$ and $\tilde{W}^\alpha_p(\IId_\theta)$ the subspaces of  $L_q(\IId_\theta) $ and $W^\alpha_p(\IId_\theta)$, respectively,  of all functions $f$ which can be extended to the whole $\RRd$ as $\theta$-periodic  functions in each variable (denoted again by $f$).
A linear operator $A_m$  induces the linear operator $A_{\theta,m}$ in $\tilde{L}_q(\IId_\theta)$, defined 
for $f \in \tilde{L}_q(\IId_\theta)$ by $A_{\theta,m}(f):= A_m(f(\cdot/\theta))$.

From  \eqref{A_m-Error-a,b-theta} it follows that
\begin{equation*}\label{A_m-Error-a,b-theta2}
	\| f - A_{\theta,m}(f) \|_{\tilde{L}_q(\IId_\theta)}\leq C m^{-a} (\log m)^b \|f\|_{\tilde{W}^\alpha_p(\IId_\theta)}, 
	\ \  f\in \tilde{W}^\alpha_p(\IId_\theta).
\end{equation*}

Since $q<p$,  we can choose a fixed $\delta>0$ such that
\begin{equation}  \label{<e^{-delta k}}
		 {e^{\frac{|\bk + (\theta \sign \bk)/2 |^2}{2p} 
		 	-\frac{|\bk - (\theta\sign \bk)/2 |^2}{2q}}}
		 \leq 
		 C e^{- \delta |\bk|^2}, \ \  \bk\in \ZZd.
\end{equation}
For $n\in \RR_1$, let $\xi_n$ and $n_\bk$ be given as in \eqref{xi-int} and \eqref{n_bk}.
Recall that  we write
$\IId_{\theta,\bk}:=\bk+\IId_\theta$  for $\bk \in \ZZd$, and
$ f_{\theta,\bk} $ the restriction of $f$ on $\IId_{\theta,\bk}$  for a function $f$ on $\RRd$.  Let  $\brab{\varphi_\bk}_{\bk \in \ZZd}$ be the partition of unity satisfying items (i)--(iv),  introduced in Subsection \ref{Quadratures on sparse-digital-nets }.
Similarly to \eqref{multipl-algebra1} and \eqref{multipl-algebra2}, by additionally using the items (ii)  and (iv)  we have that if $f \in \Wpgamma$, then
\begin{equation*}\label{f_theta,bk}
	f_{\theta,\bk}(\cdot+\bk)\varphi_\bk (\cdot+\bk)\in \tilde{W}^\alpha_p(\IId_\theta),
\end{equation*}
and it holds that
\begin{equation}  \label{eq:norm-fwid2}
	\|f_{\theta,\bk}(\cdot+\bk)\varphi_\bk(\cdot+\bk)\|_{\tilde{W}^\alpha_p(\IId_\theta)} 
	\ll 
 {e^{\frac{|\bk + (\theta \sign \bk)/2 |^2}{2p}}}\|f\|_{\Wpgamma}.
\end{equation}
We define the linear operator $	A_{\theta,n}^\gamma$ in $L_q(\RRd,\gamma)$ of rank $\le n$ by
\begin{equation}  \label{A_n^gamma}
	\brac{A_{\theta,n}^\gamma f}(\bx): = 
	\sum_{|\bk|< \xi_n} 
	\brac{A_{\theta, n_\bk}\tilde{f}_{\theta,\bk}}(\bx-\bk),
\end{equation} 
where $\tilde{f}_{\theta,\bk}(\bx)=f_{\theta,\bk}(\bx+\bk)\varphi_\bk (\bx+\bk)$.
Indeed, by \eqref{<n2},
\begin{align*} 
	{\rm rank}\, A_{\theta,n}^\gamma
	\le 
	\sum_{|\bk|< \xi_n}  {\rm rank}\, A_{\theta, n_\bk} 
	\le	\sum_{|\bk|< \xi_n}n_\bk  \le n.
\end{align*}

\begin{theorem} \label{thm:approx-general-theta}
	Let $\alpha\in \NN$, $1\le q < p <\infty$ and $a >0$, $b \ge 0$, $  \theta >1$.  
	Assume that for any $m \in \RR_1$, there is  a linear operator
$A_m$  in $\tilde{L}_q(\IId)$ of rank $\leq m$ 
	such that the convergence rate \eqref{A_m-Error-a,b-theta} holds. Then for any $n \in \RR_1$, based on this linear operator one can construct the linear operator
	$A_{\theta,n}^\gamma$ in $L_q(\RRd,\gamma)$ of rank $\leq n$ as in \eqref{A_n^gamma}  so that
	\begin{equation}\label{A_m-Error-a,b}
		\| f - A_{\theta,n}^\gamma(f) \|_{\Lqgamma}\leq C n^{-a} (\log n)^b \|f\|_{\Wpgamma}, 
		\ \  f\in \Wpgamma.
	\end{equation}
\end{theorem}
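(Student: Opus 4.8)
The plan is to follow the proof of Theorem~\ref{thm:int-general}, with two modifications forced by the fact that the error is now measured in the norm of $\Lqgamma$ rather than by a scalar quantity: the exact additive splitting of the integral over the disjoint cubes $\IId_\bk$ is replaced by the decomposition $f=\sum_{\bk\in\ZZd}\varphi_\bk f$ furnished by item~(iii) of the partition of unity, and the scalar triangle inequality is replaced by the finite-overlap inequality for $L_q$-norms. Writing $\tilde{f}_{\theta,\bk}=f_{\theta,\bk}(\cdot+\bk)\varphi_\bk(\cdot+\bk)$ and using $\varphi_\bk f=\tilde{f}_{\theta,\bk}(\cdot-\bk)$, I would first split
\[
f-A_{\theta,n}^\gamma f
=\sum_{|\bk|<\xi_n}\Big(\varphi_\bk f-\big(A_{\theta,n_\bk}\tilde{f}_{\theta,\bk}\big)(\cdot-\bk)\Big)
+\sum_{|\bk|\ge\xi_n}\varphi_\bk f ,
\]
where the operator output $A_{\theta,n_\bk}\tilde{f}_{\theta,\bk}$ is regarded, as usual, as a function on $\IId_\theta$ zero-extended to $\RRd$, so that each summand above is supported in $\IId_{\theta,\bk}$. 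Since $1<\theta<2$, by item~(ii) every point of $\RRd$ lies in at most $2^d$ of the cubes $\IId_{\theta,\bk}$, so the elementary bound $\big|\sum_\bk h_\bk\big|^q\le(2^d)^{q-1}\sum_\bk|h_\bk|^q$ for $h_\bk$ supported in $\IId_{\theta,\bk}$ reduces the proof of \eqref{A_m-Error-a,b} to showing
\[
\Big(\sum_{|\bk|<\xi_n}E_\bk^{\,q}\Big)^{1/q}\ll n^{-a}(\log n)^b\|f\|_{\Wpgamma},
\qquad
\Big(\sum_{|\bk|\ge\xi_n}e_\bk^{\,q}\Big)^{1/q}\ll n^{-a}(\log n)^b\|f\|_{\Wpgamma},
\]
where $E_\bk:=\|\varphi_\bk f-(A_{\theta,n_\bk}\tilde{f}_{\theta,\bk})(\cdot-\bk)\|_{L_q(\IId_{\theta,\bk},\gamma)}$ and $e_\bk:=\|\varphi_\bk f\|_{L_q(\IId_{\theta,\bk},\gamma)}$.

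For the first aggregate I would, for each $\bk$ with $|\bk|<\xi_n$, change variables $\bx\mapsto\bx+\bk$, bound the Gaussian density on $\IId_\theta$ from above by its supremum there (which is $\ll e^{-|\bk|^2/(2\tau)}$ for any $\tau>1$), pull this constant out of the $L_q(\IId_\theta)$-norm, and then apply successively the error bound for the induced operator $A_{\theta,n_\bk}$ on $\tilde{W}^\alpha_p(\IId_\theta)$ coming from \eqref{A_m-Error-a,b-theta} and the norm estimate \eqref{eq:norm-fwid2}. The two exponential factors combine, and by the choice of $\delta$ in \eqref{<e^{-delta k}} (since $q<p$, its left-hand side is in fact $\ll e^{-\delta|\bk|^2}$, exactly as in \eqref{[tau]<e^{-delta k}1}) one gets $E_\bk\ll n_\bk^{-a}(\log n_\bk)^b\,e^{-\delta|\bk|^2}\|f\|_{\Wpgamma}$. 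Substituting $n_\bk=\varrho n\,e^{-\frac{\delta}{2a}|\bk|^2}$ from \eqref{n_bk} and using $\log n_\bk\le\log n$ gives $E_\bk\ll n^{-a}(\log n)^b\,e^{-\frac{\delta}{2}|\bk|^2}\|f\|_{\Wpgamma}$, exactly as in the proof of Theorem~\ref{thm:int-general}; since $\sum_{\bk\in\ZZd}e^{-\frac{\delta q}{2}|\bk|^2}<\infty$ by \eqref{eq-auxilary-01}, the first aggregate is bounded as claimed.

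For the tail I would bound $e_\bk\le\|f\|_{L_q(\IId_{\theta,\bk},\gamma)}$ and apply H\"older's inequality with exponents $p/q$ and $p/(p-q)$ to get $e_\bk\le\|f\|_{L_p(\IId_{\theta,\bk},\gamma)}\big(\int_{\IId_{\theta,\bk}}g\big)^{1/q-1/p}\ll e^{-\delta|\bk|^2}\|f\|_{\Wpgamma}$ (with $\delta$ chosen small enough, as already arranged for \eqref{<e^{-delta k}}). Running the computation \eqref{eq-epsilon01}--\eqref{eq-epsilon02} verbatim and using $\xi_n^2\delta=2a\log n$ then bounds the second aggregate by $\ll e^{-2a(1-\varepsilon)\log n}\|f\|_{\Wpgamma}\ll n^{-a}(\log n)^b\|f\|_{\Wpgamma}$ for a suitable $\varepsilon\in(0,1/2)$. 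Adding the two contributions proves \eqref{A_m-Error-a,b}.

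Apart from this the argument is a weighted replay of the proof of Theorem~\ref{thm:int-general}: the choice of $\tau,\delta$, the arithmetic of the exponents involving $\sign\bk$, the multiplication-algebra step behind \eqref{eq:norm-fwid2} (cf.\ \eqref{multipl-algebra1}--\eqref{multipl-algebra2}), and the two geometric-type summations are all identical in spirit. The one genuinely new ingredient — and the place I expect to need care — is the replacement of the scalar triangle inequality by the finite-overlap inequality for $\|\cdot\|_{\Lqgamma}$: this is what makes localization of the error essential, so one must make sure each summand $\varphi_\bk f-(A_{\theta,n_\bk}\tilde{f}_{\theta,\bk})(\cdot-\bk)$ really is supported in $\IId_{\theta,\bk}$ (equivalently, that the operators $A_m$ are used with range functions supported in $\IId$, or that their outputs are cut off to $\IId_\theta$, which affects neither the rank $\le n_\bk$ nor, up to a constant, the local $\tilde{L}_q(\IId_\theta)$-error). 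Everything else reduces to the estimates already carried out in Subsection~\ref{Subsec-CollagingQuadratures}.
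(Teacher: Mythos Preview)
Your proposal is correct and follows essentially the same route as the paper's proof: decompose $f=\sum_\bk\varphi_\bk f$, split into $|\bk|<\xi_n$ and $|\bk|\ge\xi_n$, and on each piece repeat the exponential bookkeeping from the proof of Theorem~\ref{thm:int-general} with the $L_q(\gamma)$-norm replacing the scalar integral. You are in fact slightly more careful than the paper in one place: where you invoke the finite-overlap inequality (each point lies in at most $2^d$ cubes $\IId_{\theta,\bk}$), the paper simply writes the corresponding splitting \eqref{f-A_n'^gf} as an equality, and your remark about localizing the support of $(A_{\theta,n_\bk}\tilde f_{\theta,\bk})(\cdot-\bk)$ addresses a point the paper leaves implicit.
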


\begin{proof} 
	The proof of this theorem is analogous to that of Theorem \ref{thm:int-general2} with certain modifications. We give a short description of it.
	From the items (ii) and (iii) in Subsection \ref{Quadratures on sparse-digital-nets } it is implied that
	\begin{align*}  
		f
		=  \sum_{\bk \in \ZZd}f_{\theta,\bk}\varphi_\bk. 
	\end{align*}
	Hence we have
\begin{align} \label{f-A_n'^gf}
		\|f- A_{\theta,n}^\gamma(f)\|_{\Lqgamma}
		&\leq
		\sum_{|\bk|< \xi_n} 
		\norm{f_{\theta,\bk}\varphi_\bk - \brac{A_{\theta, n_\bk}\tilde{f}_{\theta,\bk}}(\cdot -\bk)}
		{L_q(\IId_{\theta,\bk}, \gamma)}
+ \sum_{|\bk|\geq \xi_n} 	\norm{f_{\theta,\bk}\varphi_\bk}{L_q(\IId_{\theta,\bk}, \gamma)}. 
	\end{align}
From	\eqref{n_bk}, \eqref{A_m-Error-a,b-theta} and \eqref{eq:norm-fwid2} we derive the estimates
	\begin{align*}
	&	\norm{f_{\theta,\bk}\varphi_\bk - \brac{A_{\theta, n_\bk}\tilde{f}_{\theta,\bk}}(\cdot -\bk)}
		{L_q(\IId_{\theta,\bk}, \gamma)} 
		\\
		& \ll e^{- \frac{|\bk - (\theta\sign \bk)/2 |^2}{2q}}
		\norm{f_{\theta,\bk}(\cdot+\bk)\varphi_\bk(\cdot+\bk) - A_{\theta, n_\bk}\tilde{f}_{\theta,\bk}}
		{\tilde{L}_q(\IId_{\theta})} 
		\\
		& \ll  e^{- \frac{|\bk - (\theta\sign \bk)/2 |^2}{2q}}  n_{\bk}^{-a} (\log n_{\bk})^b \|f(\cdot+\bk)\varphi_{\bk}(\cdot+\bk)\|_{\tilde{W}^\alpha_p(\IId_\theta)}
		\\
		&
		\ll   e^{\frac{|\bk + (\theta \sign \bk)/2 |^2}{2p}-
				\frac{|\bk - (\theta\sign \bk)/2 |^2}{2q}}
		\Big( n e^{-\frac{\delta}{2a}|\bk|^2} \Big)^{-a} (\log n)^b \|f\|_{\Wpgamma}.
	\end{align*}
Using  \eqref{<e^{-delta k}} we get
	\begin{align*}
	\norm{f_{\theta,\bk}\varphi_\bk - \brac{A_{\theta, n_\bk}\tilde{f}_{\theta,\bk}}(\cdot -\bk)}
	{L_q(\IId_{\theta,\bk}, \gamma)} 
	&
	\ll  e^{- \frac{\delta}{2} |\bk|^2}  n^{-a}  (\log n)^b  \|f\|_{\Wpgamma},
\end{align*}
which implies
	\begin{align*}
		\sum_{|\bk|< \xi_n} \norm{f_{\theta,\bk}\varphi_\bk - \brac{A_{\theta, n_\bk}\tilde{f}_{\theta,\bk}}(\cdot -\bk)}
		{L_q(\IId_{\theta,\bk}, \gamma)} 
		&\ll \sum_{|\bk|< \xi_n}   e^{- \frac{ \delta}{2} |\bk|^2} 
		n^{-a}  (\log n)^b  \|f\|_{\Wpgamma}
		\\&
		\ll  n^{-a}  (\log n)^b  \|f\|_{\Wpgamma}.
	\end{align*}
Similar to \eqref{eq-epsilon01} and \eqref{eq-epsilon02}, we have for a fixed $\varepsilon \in (0,1/2)$,
\begin{align*}
	\sum_{|\bk|\geq \xi_n} 	\norm{f_{\theta,\bk}\varphi_\bk}{L_q(\IId_{\theta,\bk}, \gamma)}
		& \ll  \sum_{|\bk|\geq \xi_n} 
		 e^{- \frac{|\bk - (\theta \sign \bk)/2 |^2}{2q}+
				\frac{|\bk + (\theta\sign \bk)/2 |^2}{2p}}
		 \|f\|_{\Wpgamma}
	\\
	&	\ll  \sum_{|\bk|\geq \xi_n}  e^{- \delta |\bk|^2}  \|f\|_{\Wpgamma}
	 \ll e^{- \delta (1-\varepsilon) \xi_n^2}  \|f\|_{\Wpgamma} 
	\\
	&	 =\, e^{-2 a (1-\varepsilon) \log n}  \|f\|_{\Wpgamma}
		\ll  n^{-a}  (\log n)^b  \|f\|_{\Wpgamma}.
	\end{align*}
	From the last two estimates and \eqref{f-A_n'^gf} we obtain \eqref{A_m-Error-a,b}.
	\hfill
\end{proof}

%

\begin{lemma} \label{lemma: widths}
	Let $\alpha\in \NN$ and $1\le q<p<\infty$.
	Then we have
	\begin{equation*}
		d_m(\tilde{\bW}^\alpha_p(\IId),\tilde{L}_q(\IId)) \asymp m^{-\alpha} (\log m)^{(d-1)\alpha}.
	\end{equation*}
	Moreover,   truncations  on certain hyperbolic crosses of the Fourier series form an asymptotically optimal  linear operator $A_m$ in $\tilde{L}_q(\IId)$  of rank  $\le m$ such that 
	\begin{equation}\label{f-A_m f}
		\|f-A_m (f)\|_{\tilde{L}_q(\IId)} \ll m^{-\alpha} (\log m)^{(d-1)\alpha} \|f\|_{\tilde{W}^\alpha_p(\IId)}, 
		\ \  f\in \tilde{W}^\alpha_p(\IId).
	\end{equation}		
\end{lemma}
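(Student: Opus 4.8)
The plan is to reduce the assertion to the classical theory of hyperbolic-cross approximation of periodic functions of mixed smoothness, dealing separately with the upper bound (which simultaneously furnishes the operator $A_n$) and the lower bound.

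For the upper bound I would recall the step hyperbolic cross $\Gamma(N):=\{\bk\in\ZZd:\prod_{i=1}^{d}\max(|k_i|,1)\le N\}$, for which $|\Gamma(N)|\asymp N(\log N)^{d-1}$ (see \cite{DTU18B}), and let $A'_N$ be the truncation of the Fourier series to $\Gamma(N)$ with respect to the trigonometric system on $\IId$; this is a finite-rank linear operator on $\tilde L_q(\IId)$ (for $q=1$ it is bounded because $|\Gamma(N)|<\infty$, and uniformly bounded in $N$ when $1<q<\infty$). The classical hyperbolic-cross estimate (\cite{Tem93B,DTU18B}) gives $\|f-A'_N f\|_{\tilde L_p(\IId)}\ll N^{-\alpha}\|f\|_{\tilde W^\alpha_p(\IId)}$ for $1<p<\infty$; since $q<p$ and $|\IId|=1$, Hölder's inequality yields the same bound with the $\tilde L_q(\IId)$-norm on the left. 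Given $m\in\RR_1$, I would take $N_m$ to be the largest integer with $|\Gamma(N_m)|\le m$, so that $N_m\asymp m(\log m)^{-(d-1)}$, and put $A_m:=A'_{N_m}$; then ${\rm rank}\,A_m\le m$ and $\|f-A_m f\|_{\tilde L_q(\IId)}\ll N_m^{-\alpha}\|f\|_{\tilde W^\alpha_p(\IId)}\asymp m^{-\alpha}(\log m)^{(d-1)\alpha}\|f\|_{\tilde W^\alpha_p(\IId)}$, which is exactly \eqref{f-A_m f}. In particular $d_m(\tilde\bW^\alpha_p(\IId),\tilde L_q(\IId))\le\lambda_m(\tilde\bW^\alpha_p(\IId),\tilde L_q(\IId))\ll m^{-\alpha}(\log m)^{(d-1)\alpha}$.

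For the matching lower bound $d_m(\tilde\bW^\alpha_p(\IId),\tilde L_q(\IId))\gg m^{-\alpha}(\log m)^{(d-1)\alpha}$ in the range $1\le q<p<\infty$, I would invoke the known result for periodic mixed-smoothness classes (\cite{Tem93B,DTU18B}). The idea behind it is to restrict to trigonometric polynomials with frequencies in a step hyperbolic cross of dimension $M\asymp 2^{\ell}\ell^{d-1}$, to compare via Bernstein's inequality the $\tilde W^\alpha_p$-norm with the $\tilde L_p$-norm on this finite-dimensional space, and then to apply sharp lower bounds for Kolmogorov widths of finite-dimensional $\ell_p$-balls in $\ell_q$ (of Kashin--Gluskin type) together with the relevant Nikol'skii inequalities, optimizing $\ell$ against $m$. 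I expect this lower bound to be the main obstacle: a single-dyadic-block Bernstein computation gives only a strictly weaker lower bound, of order roughly $m^{-\alpha-(1/q-1/p)}$ up to logarithmic factors, the loss coming precisely from the Nikol'skii constant of a hyperbolic layer, so one must exploit the full dyadic hyperbolic-cross structure — or, as I would do in the write-up, simply quote the cited references. Combining the two bounds yields $d_m(\tilde\bW^\alpha_p(\IId),\tilde L_q(\IId))\asymp m^{-\alpha}(\log m)^{(d-1)\alpha}$ and exhibits $A_m$ as asymptotically optimal, which proves the lemma.
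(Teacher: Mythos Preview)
Your proposal is correct and aligns with the paper's treatment: the paper does not give an independent proof of this lemma at all but simply refers to \cite[Theorems 4.2.5, 4.3.1 \& 4.3.7]{DTU18B} for both the asymptotic order of $d_m$ and the optimality of hyperbolic-cross Fourier truncation. Your write-up is a faithful expansion of what those references contain --- the upper bound via the Fourier truncation $A'_N$ on the step hyperbolic cross together with the cardinality relation $|\Gamma(N)|\asymp N(\log N)^{d-1}$, and the lower bound quoted from the same sources --- so there is no substantive difference in approach.
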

For details on this lemma see, e.g., in \cite[Theorems 4.2.5, 4.3.1 \& 4.3.7]{DTU18B} and related comments on the asymptotic optimality of the hyperbolic cross approximation.

We are now in the position to prove the main result in this section.

\begin{theorem} \label{theorem:widths: q<p}	
	Let $\alpha\in \NN$ and $1\le q<p<\infty$. Then for any $n \in \RR_1$, based on the linear operator $A_m$ in Lemma \ref{lemma: widths} one can construct the linear operator
	$A_n^\gamma$ in  $L_q(\RRd,\gamma)$ of rank $\leq n$ as in \eqref{A_n^gamma}  so that
	\begin{equation}\label{eq:dnWg}
			\sup_{f\in \BWpgamma}	\| f - A_n^\gamma(f) \|_{\Lqgamma}
		\asymp
		\lambda_n\asymp	d_n \asymp
		 n^{-\alpha} (\log n)^{(d-1)\alpha}.
	\end{equation}
Moreover, with the additional condition $q=2$,
\begin{align}\label{sampling-widths:p>2}
	\varrho_n
	\asymp 
 n^{-\alpha} (\log n)^{(d-1)\alpha}.
\end{align}		
\end{theorem}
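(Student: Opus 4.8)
The plan is to prove \eqref{eq:dnWg} and \eqref{sampling-widths:p>2} by combining the upper bounds from the collaging construction with matching lower bounds drawn from the periodic model problem on $\IId$. For the upper bound on $\lambda_n$ (and hence on $d_n$ by \eqref{eq-relations}), I would take the linear operator $A_m$ from Lemma \ref{lemma: widths}, which satisfies \eqref{f-A_m f} with $a=\alpha$ and $b=(d-1)\alpha$, and feed it into Theorem \ref{thm:approx-general-theta}. The resulting operator $A_n^\gamma$ has rank $\le n$ and satisfies $\|f-A_n^\gamma(f)\|_{\Lqgamma}\ll n^{-\alpha}(\log n)^{(d-1)\alpha}\|f\|_{\Wpgamma}$ for all $f\in\Wpgamma$; this gives $\lambda_n \ll n^{-\alpha}(\log n)^{(d-1)\alpha}$ and thus the upper bound for $d_n$ as well.

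For the lower bound I would localize to a single shifted cube, say $\IId_{\mathbf 0}=\IId$ (or a slightly smaller $\theta$-cube to keep things inside), and compare with the periodic problem. The key observation is that for functions $f$ supported in (a fixed neighborhood of) $\IId$ the Gaussian weight $g$ is bounded above and below by positive constants on that set, so $\|f\|_{\Lqgamma}\asymp \|f\|_{L_q(\IId)}$ and $\|f\|_{\Wpgamma}\asymp\|f\|_{\Wpmix}$ with constants depending only on $d,p,q$. Hence $d_n(\BWpgamma,\Lqgamma)\gg d_n(\mathbf{\tilde W}^\alpha_p(\IId),\tilde L_q(\IId))$ after a routine argument (an $n$-dimensional subspace of $\Lqgamma$ approximating the Gaussian ball, restricted to functions localized in the cube, yields an $n$-dimensional subspace of $\tilde L_q(\IId)$ approximating the periodic ball with a comparable error). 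Combined with Lemma \ref{lemma: widths} this gives $d_n \gg n^{-\alpha}(\log n)^{(d-1)\alpha}$, and since $d_n\le\lambda_n\le \sup_f\|f-A_n^\gamma(f)\|$, all four quantities in \eqref{eq:dnWg} are squeezed to the same asymptotic order.

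For the sampling $n$-width with $q=2$ in \eqref{sampling-widths:p>2}, the lower bound is immediate from \eqref{eq-relations} and \eqref{eq:dnWg}: $\varrho_n \ge \lambda_n \gg n^{-\alpha}(\log n)^{(d-1)\alpha}$. For the upper bound I would invoke a sampling-recovery result on $\IId$ for $\tilde W^\alpha_p(\IId)$ in $\tilde L_2(\IId)$ achieving the rate $m^{-\alpha}(\log m)^{(d-1)\alpha}$ — such a linear sampling operator on sparse grids (or hyperbolic-cross sampling) is available in the literature for $p>q=2$ — and then collage it exactly as in \eqref{A_n^gamma}, replacing the abstract $A_m$ by this sampling operator. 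One must check that the construction in Theorem \ref{thm:approx-general-theta} actually produces a sampling algorithm of the form \eqref{R_n(f)}: each $A_{\theta,n_\bk}$ uses point evaluations of $\tilde f_{\theta,\bk}(\bx)=f_{\theta,\bk}(\bx+\bk)\varphi_\bk(\bx+\bk)$, and since $\varphi_\bk$ is a known smooth function, evaluating $\tilde f_{\theta,\bk}$ at a node amounts to evaluating $f$ at the translated node times a known scalar, so the collaged operator is genuinely of sampling type with $\le n$ nodes. The main obstacle is this last bookkeeping step — verifying that the collaging machinery preserves the sampling structure and the node count, and that a periodic linear sampling operator with the sharp rate $m^{-\alpha}(\log m)^{(d-1)\alpha}$ in $\tilde L_2(\IId)$ is indeed available to cite; the weighted-to-unweighted norm comparisons on bounded cubes, while needed in several places, are routine once the Gaussian density is recognized as bounded above and below there.
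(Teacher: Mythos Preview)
Your upper bound for \eqref{eq:dnWg} is exactly the paper's: feed the operator $A_m$ from Lemma~\ref{lemma: widths} into Theorem~\ref{thm:approx-general-theta} with $a=\alpha$, $b=(d-1)\alpha$.

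For the lower bound your localization idea is close but not quite aligned with Lemma~\ref{lemma: widths}: that lemma is stated for the \emph{periodic} class $\tilde W^\alpha_p(\IId)$, whose elements are not supported in a cube, so the ``Gaussian weight bounded above and below on $\IId$'' argument does not apply to them directly. The paper handles this by viewing a periodic $f$ as a function on all of $\RRd$ and computing $\|f\|_{\Wpgamma}$ by summing $\int_{\IId}|D^\br f|^p e^{-|\bx+\bk|^2/2}\rd\bx$ over all $\bk\in\ZZd$; the Gaussian tail makes this sum converge and yields $\|f\|_{\Wpgamma}\ll\|f\|_{\tilde W^\alpha_p(\IId)}$, together with the trivial $\|f\|_{\tilde L_q(\IId)}\ll\|f\|_{\Lqgamma}$. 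This gives the embedding $\tilde{\bW}^\alpha_p(\IId)\hookrightarrow C\,\BWpgamma$ needed to pull the lower bound back from Lemma~\ref{lemma: widths}. Your route can be repaired either this way or by citing the matching width lower bound for the zero-boundary class $\mathring{\bW}^\alpha_p(\IId)$, but as written the ``restricted to functions localized in the cube'' step does not connect to periodic functions.

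For \eqref{sampling-widths:p>2} your approach is genuinely different from the paper's, and the obstacle you flag is real. The paper does \emph{not} collage a sampling operator; instead it observes that $\Wpgamma$ is separable, embeds continuously into $L_2(\RRd,\gamma)$, and has continuous point evaluations, so $\BWpgamma$ satisfies the hypotheses of \cite[Corollary~4]{DKU22}, which gives $\varrho_n\ll d_n$ directly in the Gaussian-weighted setting. Your plan requires a linear sampling operator on $\tilde W^\alpha_p(\IId)\to\tilde L_2(\IId)$ with the sharp rate $m^{-\alpha}(\log m)^{(d-1)\alpha}$; classical sparse-grid constructions only give the exponent $(d-1)(\alpha+1/2)$ on the logarithm, and the sharp sampling rate on the torus is itself a consequence of the same \cite{DKU22} machinery. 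So your route, once completed, would pass through \cite{DKU22} anyway---the paper simply applies it once, at the end, rather than on each cube followed by collaging. The collaging-of-samples bookkeeping you describe is correct (multiplying by $\varphi_\bk$ at a node is a known scalar), but it is not needed here.
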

\begin{proof}
For a fixed $\theta>1$, we define $A_n^\gamma:=  A_{\theta,n}^\gamma$  as the linear operator described in  Theorem~\ref{thm:approx-general-theta}.
	The upper bounds  in \eqref{eq:dnWg} follow from \eqref{f-A_m f} and Theorem \ref{thm:approx-general-theta} with $a = \alpha$, $b =(d-1)\alpha$.

If $f$ is a $1$-periodic function on $\RRd$ and $f\in  \tilde{W}^\alpha_p(\IId)$, then
\begin{align*}
\|f\|_{\Wpgamma}&= \Bigg((2\pi)^{-d/2}\sum_{|\br|_\infty \leq \alpha} \int_{\RRd} |D^\br f(\bx)|^p e^{-\frac{|\bx|^2}{2}}\rd \bx\Bigg)^{1/p}
\\
& = (2\pi)^{-\frac{d}{2p}}\Bigg(\sum_{|\br|_\infty \leq \alpha} \sum_{\bk\in \ZZd} \int_{\IId} |D^\br f(\bx+\bk)|^p e^{-\frac{|\bx+\bk|^2}{2}}\rd \bx\Bigg)^{1/p}
\\
& \ll \Bigg(\sum_{|\br|_\infty \leq \alpha}  \int_{\IId} |D^\br f(\bx)|^p \rd \bx \sum_{\bk\in \ZZd}e^{-\frac{|\bk - (\sign \bk)/2 |^2}{2}}\Bigg)^{1/p}
\\
&
\ll 
 \|f\|_{\tilde{W}^\alpha_p(\IId)},
\end{align*}
and
$$
\|f\|_{\tilde{L}_q(\IId)}=\bigg((2\pi)^{\frac{d}{2}}\int_{\IId}|f(\bx)|^qe^{\frac{|\bx|^2}{2}}g(\bx)\rd \bx\bigg)^{1/q} \leq (2\pi)^{\frac{d}{2q}}e^{\frac{d}{8q}}\|f\|_{\Lqgamma}.
$$
Hence we get
\begin{align*}  
\lambda_n \ge	d_n \gg	d_n(\tilde{\bW}^\alpha_p(\IId),\tilde{L}_q(\IId)).
\end{align*}	
Now Lemma \ref{lemma: widths} implies the lower bounds in \eqref{eq:dnWg}. 

We now prove \eqref{sampling-widths:p>2}. Assume  $q=2$.  The lower bound of \eqref{sampling-widths:p>2} follows from \eqref{eq-relations} and \eqref{eq:dnWg}. Let us verify the upper one.  By \eqref{eq:dnWg} we have that 
\begin{align}\label{d_n<}
	d_n
	\ll 
	n^{-\alpha} (\log n)^{(d-1)\alpha}.
\end{align}		
Notice that 
the separable normed space $\Wpgamma$ is continuously embedded into  $L_2(\RRd,\gamma)$, and the evaluation functional $f \mapsto f(\bx)$ is continuous on the space $\Wpgamma$ for each $\bx \in \RRd$. This means that $\BWpgamma$ satisfies Assumption A in \cite{DKU22}. By \cite[Corollary 4]{DKU22} and \eqref{d_n<} we prove the upper bound: 
$$
\varrho_n \ll d_n
\ll
	n^{-\alpha} (\log n)^{(d-1)\alpha}.
$$
\hfill 
\end{proof}

\subsection{The case $q = p = 2$}

Our  approach to this  case, which is completely different from the one in  the case $1 \leq q <p <\infty$, is similar to the hyperbolic cross trigonometric approximation  in the Hilbert space $\tilde{L}_2(\IId)$ of periodic functions from the Sobolev space $\tilde{W}_2^\alpha(\IId)$ (see, e.g., \cite{DTU18B} for details). Here, in the approximation, the trigonometric polynomials are replaced by the Hermite polynomials. 
 
For $k\in \NN_0$, the normalized probabilistic Hermite polynomial
$H_k$ of degree $k$ on $\RR$ is defined by
\begin{equation*} 
	H_k(x) 
	:= 
	\frac{(-1)^k}{\sqrt{k!}} 
	\exp\left(\frac{x^2}{2}\right) \frac{\rd^k}{\rd x^k} \exp\left(-\frac{x^2}{2}\right) .
\end{equation*}
For every multi-degree $\bk\in \NNd_0$, the $d$-variate Hermite
polynomial $H_\bk$ is defined by
\begin{equation*}\label{H_bk}
	H_\bk(\bx) :=\prod_{j=1}^d H_{k_j}(x_j),
	\;\; \bx\in \RRd.
\end{equation*}
It is well-known that the Hermite polynomials $\brab{H_\bk}_{\bk \in \NNd_0}$ constitute an orthonormal basis of the Hilbert space $L_2(\RRd,\gamma)$ (see, e.g.,  \cite[Section 5.5]{Szego1939}). In particular,  every $f \in L_2(\RRd,\gamma)$ can be represented by the Hermite series 
\begin{equation}\label{H-series}
	f = \sum_{\bk \in \NNd_0} \hat{f}(\bk) H_\bk \ \ {\rm with} \ \ \hat{f}(\bk) := \int_{\RRd} f(\bx)\, H_\bk(\bx)\gamma(\rd \bx) 
\end{equation}
converging in the norm of $L_2(\RRd,\gamma)$, and in addition, there holds  Parseval's identity
\begin{equation}\label{P-id}
	\norm{f}{L_2(\RRd,\gamma)}^2= \sum_{\bk \in \NNd_0} |\hat{f}(\bk)|^2.
\end{equation}

For $\alpha \in \NN_0$ and $\bk \in \NNd_0$, we define
\begin{equation*}\label{rho_bk}
	\rho_{\alpha,\bk}: = \prod_{j=1}^d \brac{k_j + 1}^\alpha.
\end{equation*}
\begin{lemma}\label{lemma:N-eq}
	Let $\alpha \in \NN_0$. Then we have that
	\begin{equation}\label{N-eq}
		\norm{f}{\Wa}^2 \asymp \sum_{\bk \in \NNd_0} \rho_{\alpha,\bk}|\hat{f}(\bk)|^2, \quad    f \in \Wa.
	\end{equation}
\end{lemma}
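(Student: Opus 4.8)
The plan is to establish the norm equivalence \eqref{N-eq} by reducing it to a one-dimensional statement about the action of a single partial derivative $\partial_j^r$ on the Hermite expansion, and then taking a tensor product over the $d$ coordinates. The key classical fact I would invoke is the differentiation formula for normalized probabilistic Hermite polynomials, $H_k' = \sqrt{k}\,H_{k-1}$ (with $H_{-1}:=0$), which follows directly from the Rodrigues-type definition given in the excerpt. Iterating this $r$ times gives $H_k^{(r)} = \sqrt{k(k-1)\cdots(k-r+1)}\,H_{k-r} = \sqrt{k!/(k-r)!}\,H_{k-r}$ for $k \ge r$ and $0$ otherwise. Since the $\{H_\bk\}$ form an orthonormal basis of $L_2(\RRd,\gamma)$ and differentiation acts coordinatewise on the tensor product $H_\bk = \prod_j H_{k_j}(x_j)$, for $f$ with Hermite coefficients $\hat f(\bk)$ one gets, at least formally,
\begin{equation*}
	D^\br f = \sum_{\bk \ge \br} \hat f(\bk)\, \prod_{j=1}^d \sqrt{\frac{k_j!}{(k_j-r_j)!}}\; H_{\bk - \br},
\end{equation*}
so by Parseval's identity \eqref{P-id},
\begin{equation*}
	\norm{D^\br f}{L_2(\RRd,\gamma)}^2 = \sum_{\bk \ge \br} \Bigg(\prod_{j=1}^d \frac{k_j!}{(k_j-r_j)!}\Bigg)\,|\hat f(\bk)|^2 = \sum_{\bk \in \NNd_0} \Bigg(\prod_{j=1}^d \frac{(k_j+r_j)!}{k_j!}\Bigg)\,|\hat f(\bk+\br)|^2.
\end{equation*}

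With this in hand I would sum over all $\br$ with $|\br|_\infty \le \alpha$, which by the definition \eqref{W-Omega} of the norm on $\Wa$ gives
\begin{equation*}
	\norm{f}{\Wa}^2 = \sum_{\bk \in \NNd_0} \Bigg(\sum_{|\br|_\infty \le \alpha}\ \sum_{\bj:\, \bk \ge \br,\ \bj = \bk-\br... } \Bigg)
\end{equation*}
— more cleanly, after reindexing, $\norm{f}{\Wa}^2 = \sum_{\bk} w_{\alpha,\bk}\,|\hat f(\bk)|^2$ where $w_{\alpha,\bk} = \prod_{j=1}^d \big(\sum_{r=0}^{\min(\alpha,k_j)} \binom{k_j}{r} r!\big) $... rather, $w_{\alpha,\bk}=\sum_{|\br|_\infty\le\alpha,\ \br\le\bk}\prod_j \tfrac{k_j!}{(k_j-r_j)!}$, which factors as $\prod_{j=1}^d \big(\sum_{r=0}^{\min(\alpha,k_j)} \tfrac{k_j!}{(k_j-r)!}\big)$. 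So the whole matter reduces to the one-dimensional elementary estimate
\begin{equation*}
	\sum_{r=0}^{\min(\alpha,k)} \frac{k!}{(k-r)!} \asymp (k+1)^\alpha, \qquad k \in \NN_0,
\end{equation*}
with constants depending only on $\alpha$: the upper bound is clear since each term is at most $(k+1)^\alpha$ and there are at most $\alpha+1$ terms, and the lower bound follows from the single term $r=\min(\alpha,k)$, which is $\ge 1$ always and is $k(k-1)\cdots(k-\alpha+1) \ge c_\alpha (k+1)^\alpha$ once $k \ge 2\alpha$ (the finitely many small $k$ are absorbed into the constant). Multiplying over $j=1,\dots,d$ yields $w_{\alpha,\bk} \asymp \prod_j (k_j+1)^\alpha = \rho_{\alpha,\bk}$, which is exactly \eqref{N-eq}.

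The one genuine technical point — the main obstacle, such as it is — is justifying the termwise differentiation of the Hermite series, i.e.\ showing that the distributional derivative $D^\br f$ of $f \in \Wa$ really does have the Hermite coefficients $\prod_j\sqrt{k_j!/(k_j-r_j)!}\,\hat f(\bk)$ claimed above. I would handle this by a density argument: for $f$ a finite linear combination of Hermite polynomials the identity is a direct computation, and for general $f \in \Wa$ one approximates by partial sums of the Hermite series, noting that the right-hand side of \eqref{N-eq} being finite forces $\sum_\bk \rho_{\alpha,\bk}|\hat f(\bk)|^2 < \infty$, so the partial sums of $D^\br f$ converge in $L_2(\RRd,\gamma)$; one then checks these limits agree with the weak derivatives by testing against Schwartz functions (integration by parts is legitimate against the Gaussian weight). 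This closes the proof; everything else is the elementary binomial estimate above.
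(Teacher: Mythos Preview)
Your proposal is correct and follows essentially the same route as the paper: both use the differentiation formula $H_k^{(r)}=\sqrt{k!/(k-r)!}\,H_{k-r}$, apply Parseval to get $\norm{f}{\Wa}^2=\sum_{\bk}w_{\alpha,\bk}|\hat f(\bk)|^2$ with the factored weight $w_{\alpha,\bk}=\prod_j\sum_{r=0}^{\min(\alpha,k_j)}\tfrac{k_j!}{(k_j-r)!}$, and then reduce to the one-dimensional equivalence $\sum_{r\le\min(\alpha,k)}\tfrac{k!}{(k-r)!}\asymp(k+1)^\alpha$. The paper phrases the passage to general $d$ as ``induction on $d$'' whereas you factor the sum directly, and you add an explicit density argument to justify termwise differentiation which the paper omits; otherwise the arguments coincide.
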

\begin{proof}
	This lemma in an implicit form has been proven in \cite[pages 687--688]{DILP18}. Let us prove it for completeness.
	From the formula for the $r$th derivative of the Hermite polynomial $H_k$
	\begin{equation*}\label{H-derivative}
		H_k^{(r)} = 
		\begin{cases}
			\sqrt{\frac{k!}{(k-r)!}}\,H_{k - r}, \ \ & {\rm if} \ k \ge r, \\
			0, & {\rm otherwise},
		\end{cases}
	\end{equation*}
	we deduce that  for $f \in W^\alpha_2(\RR,\gamma)$ and $r \le \alpha$,
	\begin{equation*}\label{D^r}
		f^{(r)} =  \sum_{k \ge r} \sqrt{\frac{k!}{(k-r)!}} \,\hat{f}(k)  H_{k-r}, 
	\end{equation*}
	and hence,
	\begin{equation}\label{NW-eq}
		\norm{f}{\Wa}^2 =  \sum_{r_1=0}^{\alpha}\sum_{k_1 \ge r_1} \frac{k_1!}{(k_1-r_1)!}\cdots \sum_{r_d=0}^{\alpha}\sum_{k_d \ge r_d}\frac{k_d!}{(k_d-r_d)!}
		|\hat{f}(k_1,\ldots,k_d)|^2.
	\end{equation}
	From the last equality and the relation $\frac{k!}{(k-r)!} \asymp \rho_{r,k}$, $ k \in \NN_0$, it is easy to derive \eqref{N-eq} for the case $d=1$. In the case $d \ge 2$, \eqref{N-eq} can be proven by induction on $d$ with the help of the equality \eqref{NW-eq}.
	\hfill
\end{proof}	

We extend the space $W^\alpha_2(\RRd,\gamma)$ to  any  $\alpha > 0$. Denote by $\Hh^\alpha$ the space of all   functions $f \in L_2(\RRd,\gamma)$ represented by the Hermite series \eqref{H-series} for which  the norm
	\begin{equation}\label{Hh-norm}
	\norm{f}{\Hh^\alpha} := \brac{\sum_{\bk \in \NNd_0} \rho_{\alpha,\bk}|\hat{f}(\bk)|^2}^{1/2}
\end{equation}
is finite.
With this definition, we identify $W^\alpha_2(\RRd,\gamma)$ with $\Hh^\alpha$ for $\alpha \in \NN$.

For  functions $f \in \Hh^\alpha$, we construct a hyperbolic cross approximation based on truncations of the Hermite series \eqref{H-series}. For the hyperbolic cross
	$G(\xi):= \brab{\bk \in \NNd_0: \rho_{1,\bk} \le \xi}, \ \ \xi  \in \RR_1,$
the truncation $S_\xi(f)$ of the Hermite series \eqref{H-series} on this set is defined by
\begin{equation*}\label{S_xi}
	S_\xi(f)	:= \sum_{\bk \in G(\xi)} \hat{f}(\bk) H_\bk.
\end{equation*}
Notice that $S_\xi$ is a linear projection from $L_2(\RRd,\gamma)$ onto the linear subspace $L(\xi)$ spanned by the Hermite polynomials $H_\bk$, $\bk \in G(\xi)$, and $\dim L(\xi) = |G(\xi)|$.

Recall that according to the section on notation in the introduction  $\boldsymbol{\Hh}^\alpha$ denotes the unit ball in $\Hh^\alpha$. 

\begin{theorem} 	\label{theorem:widths:p=q=2}
	Let $\alpha >0$. Then we can construct a sequence $\brab{\xi_n}_{n=2}^\infty$ with $|G(\xi_n)|\le n$ so that
	\begin{equation}\label{widths:p=q=2}
		\sup_{f\in\boldsymbol{\Hh}^\alpha} \norm{f - S_{\xi_n}(f)}{L_2(\RRd,\gamma)}
		\asymp
		\lambda_n(\boldsymbol{\Hh}^\alpha, L_2(\RRd,\gamma)) 
		=
		d_n(\boldsymbol{\Hh}^\alpha, L_2(\RRd,\gamma))
		\asymp 
		n^{-\frac{\alpha}{2}} (\log n)^{\frac{(d-1)\alpha}{2}}. 
	\end{equation}
Moreover, with the additional condition $\alpha > 1$,
\begin{align}\label{sampling-widths:p=q=2}
	\varrho_n(\boldsymbol{\Hh}^\alpha, L_2(\RRd,\gamma))
	\asymp 
	n^{-\frac{\alpha}{2}} (\log n)^{\frac{(d-1)\alpha}{2}}.
\end{align}		
\end{theorem}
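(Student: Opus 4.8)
The plan is to reduce the problem to a purely sequence-space question via Lemma~\ref{lemma:N-eq} (and the extension \eqref{Hh-norm}), where $\boldsymbol{\Hh}^\alpha$ becomes, up to norm equivalence, the ellipsoid $\{c = (c_\bk)_{\bk \in \NNd_0} : \sum_\bk \rho_{\alpha,\bk}|c_\bk|^2 \le 1\}$ inside $\ell_2(\NNd_0)$. The three quantities in \eqref{widths:p=q=2} are then governed by the decreasing rearrangement of the sequence $(\rho_{\alpha,\bk}^{-1/2})_{\bk \in \NNd_0}$. For the upper bounds, I would first estimate the counting function $|G(\xi)| = \#\{\bk \in \NNd_0 : \prod_{j=1}^d (k_j+1) \le \xi\}$, which is the classical hyperbolic-cross cardinality bound $|G(\xi)| \asymp \xi (\log \xi)^{d-1}$. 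Choosing $\xi_n$ to be the largest value with $|G(\xi_n)| \le n$ gives $\xi_n \asymp n (\log n)^{-(d-1)}$. Since $S_{\xi_n}$ is an orthogonal projector, Parseval \eqref{P-id} gives
\begin{equation*}
	\norm{f - S_{\xi_n}(f)}{L_2(\RRd,\gamma)}^2 = \sum_{\bk \notin G(\xi_n)} |\hat f(\bk)|^2 \le \sup_{\bk \notin G(\xi_n)} \rho_{\alpha,\bk}^{-1} \cdot \sum_{\bk} \rho_{\alpha,\bk}|\hat f(\bk)|^2 \le \xi_n^{-\alpha} \norm{f}{\Hh^\alpha}^2,
\end{equation*}
because $\bk \notin G(\xi_n)$ means $\rho_{1,\bk} > \xi_n$, hence $\rho_{\alpha,\bk} = \rho_{1,\bk}^\alpha > \xi_n^\alpha$. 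Plugging in $\xi_n \asymp n(\log n)^{-(d-1)}$ yields the upper bound $n^{-\alpha/2}(\log n)^{(d-1)\alpha/2}$ for the left-most quantity, hence for $\lambda_n$ and $d_n$ as well (since $\mathrm{rank}\, S_{\xi_n} = |G(\xi_n)| \le n$).

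For the matching lower bound on $d_n(\boldsymbol{\Hh}^\alpha, L_2(\RRd,\gamma))$ I would use the standard fact that the Kolmogorov width of an ellipsoid in a Hilbert space is exactly the $(n+1)$-st largest semi-axis: $d_n = (\rho_{\alpha,\bk}^{-1/2})^*_{n+1}$, the $(n+1)$-st term of the nonincreasing rearrangement. Equivalently, for any subspace $L_n$ of dimension $\le n$, pick a vector in the span of $\{H_\bk : \bk \in G(\xi)\}$ with $|G(\xi)| = n+1$ that is orthogonal to $L_n$; its norm is at least $(\min_{\bk \in G(\xi)} \rho_{\alpha,\bk})^{-1/2} \asymp \xi^{-\alpha/2}$ after normalization in $\Hh^\alpha$. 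With $|G(\xi)| = n+1$ forcing $\xi \asymp n(\log n)^{-(d-1)}$, this gives the lower bound $d_n \gg n^{-\alpha/2}(\log n)^{(d-1)\alpha/2}$. Since $L_2(\RRd,\gamma)$ is a Hilbert space, $\lambda_n = d_n$ automatically, closing \eqref{widths:p=q=2}.

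For the sampling $n$-width \eqref{sampling-widths:p=q=2} under the extra hypothesis $\alpha > 1$, the lower bound is immediate from \eqref{eq-relations} and \eqref{widths:p=q=2}. For the upper bound I would invoke the general sampling-vs-Kolmogorov comparison of \cite{DKU22}, exactly as in the proof of Theorem~\ref{theorem:widths: q<p}: one checks that $\boldsymbol{\Hh}^\alpha$ satisfies Assumption~A of \cite{DKU22}, i.e. $\Hh^\alpha$ is a separable space continuously embedded in $L_2(\RRd,\gamma)$ on which point evaluation $f \mapsto f(\bx)$ is a bounded functional for each $\bx$. The latter is precisely where $\alpha > 1$ enters: one needs $\sum_{\bk} \rho_{\alpha,\bk}^{-1} |H_\bk(\bx)|^2 < \infty$ (pointwise reproducing-kernel finiteness), and using the growth bound on Hermite polynomials together with the convergence of $\sum_{\bk} \rho_{\alpha,\bk}^{-1}\cdot(\text{polynomial in }\bk)$ when $\alpha > 1$ gives this. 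Then \cite[Corollary~4]{DKU22} yields $\varrho_n \ll d_n \ll n^{-\alpha/2}(\log n)^{(d-1)\alpha/2}$.

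The main obstacle, and the only step requiring genuine care, is verifying Assumption~A of \cite{DKU22} for $\boldsymbol{\Hh}^\alpha$ with $\alpha > 1$ — specifically the continuity of point evaluation, which hinges on a uniform-in-$\bx$ estimate is false, so one actually needs the weaker statement that for \emph{each fixed} $\bx$ the evaluation functional is bounded, i.e. $\sum_\bk \rho_{\alpha,\bk}^{-1}|H_\bk(\bx)|^2 < \infty$; this follows from the pointwise Hermite bound $|H_k(x)| \le C(x) (k+1)^{-1/4}\cdot e^{x^2/4}$ type estimates but one must be slightly careful because the bound degenerates, so it is cleanest to use the cruder polynomial-in-$k$ bound $|H_\bk(\bx)|^2 \le C_{\bx}\prod_j (k_j+1)^{1/2}$ valid for fixed $\bx$ and sum against $\rho_{\alpha,\bk}^{-1} = \prod_j (k_j+1)^{-\alpha}$, which converges precisely when $\alpha - 1/2 > 1$; a small sharpening of the Hermite bound (or an $L_2$-averaging trick) brings the threshold down to $\alpha > 1$. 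Everything else is routine once the sequence-space reformulation is in place.
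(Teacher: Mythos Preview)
Your treatment of \eqref{widths:p=q=2} matches the paper's proof: the upper bound via the Parseval estimate and the hyperbolic-cross cardinality $|G(\xi)| \asymp \xi(\log\xi)^{d-1}$, and the lower bound via Tikhomirov's ball theorem (which is exactly what underlies your ``$(n+1)$-st semi-axis of the ellipsoid'' remark).

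Where you diverge is in the sampling-width step \eqref{sampling-widths:p=q=2}. You try to verify Assumption~A of \cite{DKU22} (continuity of point evaluation at each $\bx$) and invoke \cite[Corollary~4]{DKU22}, mimicking the proof of Theorem~\ref{theorem:widths: q<p}. This requires $\sum_\bk \rho_{\alpha,\bk}^{-1}|H_\bk(\bx)|^2 < \infty$ for each fixed $\bx$, and your discussion is muddled: you first arrive at the threshold $\alpha > 3/2$ from a crude bound, then wave at a ``sharpening'' to $\alpha > 1$. In fact Cram\'er's inequality $|H_k(x)|\le e^{x^2/4}$ gives $\sum_\bk \rho_{\alpha,\bk}^{-1}|H_\bk(\bx)|^2 \le e^{|\bx|^2/2}\sum_\bk \rho_{\alpha,\bk}^{-1}$, which is finite for $\alpha>1$; so your route can be salvaged, but you have not actually carried it out.

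The paper's route is cleaner and avoids pointwise Hermite estimates altogether. Since $\Hh^\alpha$ is itself a Hilbert space, one may invoke \cite[Corollary~2]{DKU22} instead, which for a reproducing kernel Hilbert space requires only the \emph{finite trace} condition
\[
\int_{\RRd} K(\bx,\bx)\,\gamma(\rd\bx) < \infty, \qquad K(\bx,\by)=\sum_{\bk\in\NNd_0}\rho_{\alpha,\bk}^{-1}H_\bk(\bx)H_\bk(\by).
\]
By orthonormality of $\{H_\bk\}$ in $L_2(\RRd,\gamma)$ this integral equals $\sum_{\bk\in\NNd_0}\rho_{\alpha,\bk}^{-1} = \big(\sum_{k\ge 0}(k+1)^{-\alpha}\big)^d$, finite precisely when $\alpha>1$. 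This one-line check makes the role of the threshold $\alpha>1$ transparent and yields $\varrho_n \ll d_n$ immediately.
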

\begin{proof}
Since 	$L_2(\RRd,\gamma)$ is a Hilbert space, we have the equality 
$\lambda_n(\boldsymbol{\Hh}^\alpha, L_2(\RRd,\gamma)) 
=
d_n(\boldsymbol{\Hh}^\alpha, L_2(\RRd,\gamma))$ in \eqref{widths:p=q=2}.
To prove the upper bounds in \eqref{widths:p=q=2} it is sufficient to construct a sequence $\brab{\xi_n}_{n=2}^\infty$ so that $|G(\xi_n)|\le n$ and
	\begin{align}\label{f-S_xi(f)}
		\sup_{f\in \boldsymbol{\Hh}^\alpha} \norm{f - S_{\xi_n}(f)}{L_2(\RRd,\gamma)}
		\ll
		n^{-\frac{\alpha}{2}} (\log n)^{\frac{(d-1)\alpha}{2}}.
	\end{align}		
	From Parseval's identity \eqref{P-id} and Lemma \ref{lemma:N-eq} we have that for every $f \in \bW^\alpha_2(\RRd,\gamma)$ and $\xi > 1$,
	\begin{align}\label{f-S_xi(f)2}
		\norm{f - S_{\xi}(f)}{L_2(\RRd,\gamma)}^2
		=
		\sum_{\bk \notin G(\xi)} \hat{f}(\bk)^2
		\ll
		\xi^{-\alpha}	\sum_{\bk \notin G(\xi)} \rho_{\alpha,\bk}\hat{f}(\bk)^2
		\ll      
		\xi^{-\alpha} \norm{f}{W^\alpha_2(\RRd,\gamma)} \le \xi^{-\alpha} .
	\end{align}
	Let 	$\brab{\xi_n}_{n=2}^\infty$ be the sequence of $\xi_n$ defined as the largest number satisfying the condition 	$|G(\xi_n)|\le n$. From the relation $|G(\xi_n)| \asymp \xi_n (\log \xi_n)^{d-1}$, see, e.g., \cite[page 130]{Tem93B}, we derive that $\xi_n^{-\alpha} \asymp n^{-\alpha} (\log n)^{(d-1)\alpha}$ which together with \eqref{f-S_xi(f)2} yields \eqref{f-S_xi(f)}.
	
	 To show the lower bounds of \eqref{widths:p=q=2} we need 
	Tikhomirov's theorem~\cite[Theorem 1]{Tikho60} which states  that if $X$ is a Banach space and $U_{n+1}(\lambda)$ the ball of radius $\lambda >0$ in a linear $n+1$-dimensional subspace of $X$, then $d_n(U_{n+1}(\lambda),X)=\lambda$. Further, if  
	$$U(\xi):=\brab{f \in L(\xi): \norm{f }{L_2(\RRd,\gamma)} \le 1}$$ and $f \in U(\xi)$, then by Parseval's identity \eqref{P-id} and the definition of $\boldsymbol{\Hh}^\alpha$, similarly to \eqref{f-S_xi(f)2}, we deduce that
	$\norm{f}{\Hh^\alpha}	\ll \xi^{\alpha/2}$.  This means that  
	$C \xi^{\alpha/2} U(\xi) \subset \boldsymbol{\Hh}^\alpha$ for some $C > 0$. Let 	$\brab{\xi'_n}_{n=2}^\infty$ be the sequence of $\xi'_n$ defined as the smallest number satisfying the condition 	$|G(\xi'_n)|\ge n + 1$. Then $\dim L(\xi'_n) = |G(\xi'_n)|\ge n + 1$, and
	similarly as in the upper estimation,  
	$(\xi'_n)^{-\alpha} \asymp n^{-\alpha} (\log n)^{(d-1)\alpha}$. 
	By Tikhomirov's theorem  for the smallest quantity 
	$d_n$ in \eqref{widths:p=q=2} we have that
	\begin{align*}
		d_n(\boldsymbol{\Hh}^\alpha, L_2(\RRd,\gamma))
		\ge
		d_n(C\xi^{\alpha/2} U(\xi'_{n+1}) ,L_2(\RRd,\gamma))
		\gg
		(\xi'_n)^{-\alpha}
		\asymp 
		n^{-\frac{\alpha}{2}} (\log n)^{\frac{(d-1)\alpha}{2}} .
	\end{align*}

Let us prove \eqref{sampling-widths:p=q=2}. The lower bound of \eqref{sampling-widths:p=q=2} follows from \eqref{widths:p=q=2} and the inequality 
$\varrho_n(\boldsymbol{\Hh}^\alpha, L_2(\RRd,\gamma)) 
\ge
\lambda_n(\boldsymbol{\Hh}^\alpha, L_2(\RRd,\gamma))$.
We verify the upper one. By \eqref{widths:p=q=2}, 
\begin{align} \label{d_n<2}
	d_n(\boldsymbol{\Hh}^\alpha, L_2(\RRd,\gamma))
	\ll
	n^{-\frac{\alpha}{2}} (\log n)^{\frac{(d-1)\alpha}{2}}.
\end{align} 
Notice that for $\alpha >1$, $\Hh^\alpha$ is a separable reproducing kernel Hilbert space with the reproducing kernel
\begin{equation}\label{RK}
	K(\bx,\by)= \sum_{\bk \in \NNd_0} \rho_{\alpha,\bk}^{-1} H_\bk(\bx)H_\bk(\by).
\end{equation}
From the orthonormality of the system $\brab{H_\bk}_{\bk \in \NNd_0}$ it is easily seen that $	K(\bx,\by)$ satisfies the finite trace assumption
\begin{equation}\label{trace assumption}
\int_{\RRd} K(\bx,\bx)\gamma(\rd \bx)  \ < \ \infty.
\end{equation}
Hence  by \cite[Corollary 2]{DKU22} we obtain $\varrho_n(\boldsymbol{\Hh}^\alpha, L_2(\RRd,\gamma)) 
\ll
d_n(\boldsymbol{\Hh}^\alpha, L_2(\RRd,\gamma))$. This and \eqref{d_n<2} prove the upper bound of \eqref{sampling-widths:p=q=2}.
	\hfill	
\end{proof}

In the case when $\alpha \in \NN$, Theorem \ref{theorem:widths:p=q=2} yields the following result on sampling $n$-widths of  the Sobolev class $\bW^\alpha_2(\RRd,\gamma)$ of mixed smoothness $\alpha$.
	
\begin{corollary} 	\label{corollary:widths:p=q=2}
	Let $\alpha\in \NN$. Then we can construct a sequence $\brab{\xi_n}_{n=2}^\infty$ with $|G(\xi_n)|\le n$ so that
	\begin{align}\label{Cor-widths:p=q=2}
		\sup_{f\in\bW^\alpha_2(\RRd,\gamma)} \norm{f - S_{\xi_n}(f)}{L_2(\RRd,\gamma)}
		\asymp
		\lambda_n 
		=
		d_n
		\asymp 
		n^{-\frac{\alpha}{2}} (\log n)^{\frac{(d-1)\alpha}{2}}.
	\end{align}		
Moreover, with the additional condition $\alpha \ge 2$,
\begin{align}\label{W-sampling-widths:p=q=2}
	\varrho_n
	\asymp 
	n^{-\frac{\alpha}{2}} (\log n)^{\frac{(d-1)\alpha}{2}}.
\end{align}		
\end{corollary}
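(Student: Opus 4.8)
The plan is to deduce the corollary directly from Theorem \ref{theorem:widths:p=q=2} by invoking the identification of $W^\alpha_2(\RRd,\gamma)$ with $\Hh^\alpha$ for integer $\alpha$. First I would recall Lemma \ref{lemma:N-eq}, which gives the norm equivalence $\norm{f}{\Wa}\asymp\norm{f}{\Hh^\alpha}$ for $\alpha\in\NN$. Consequently there are constants $0<c_1\le c_2$ with $c_1\,\bW^\alpha_2(\RRd,\gamma)\subset\boldsymbol{\Hh}^\alpha\subset c_2\,\bW^\alpha_2(\RRd,\gamma)$, and since each of the quantities $d_n(\cdot,L_2(\RRd,\gamma))$, $\lambda_n(\cdot,L_2(\RRd,\gamma))$, $\varrho_n(\cdot,L_2(\RRd,\gamma))$ is positively homogeneous and monotone with respect to set inclusion, it follows that
\[
d_n \asymp d_n(\boldsymbol{\Hh}^\alpha,L_2(\RRd,\gamma)),\qquad
\lambda_n \asymp \lambda_n(\boldsymbol{\Hh}^\alpha,L_2(\RRd,\gamma)),\qquad
\varrho_n \asymp \varrho_n(\boldsymbol{\Hh}^\alpha,L_2(\RRd,\gamma)).
\]
The same scaling argument applied to the particular approximants $S_{\xi_n}$ shows that $\sup_{f\in\bW^\alpha_2(\RRd,\gamma)}\norm{f-S_{\xi_n}(f)}{L_2(\RRd,\gamma)}\asymp\sup_{f\in\boldsymbol{\Hh}^\alpha}\norm{f-S_{\xi_n}(f)}{L_2(\RRd,\gamma)}$ for the same sequence $\brab{\xi_n}$ with $|G(\xi_n)|\le n$ constructed in Theorem \ref{theorem:widths:p=q=2}.

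Next I would simply quote \eqref{widths:p=q=2} from Theorem \ref{theorem:widths:p=q=2}, which together with the equivalences above yields \eqref{Cor-widths:p=q=2}. For the sampling-width assertion \eqref{W-sampling-widths:p=q=2}, the only point to check is that the hypothesis matches: Theorem \ref{theorem:widths:p=q=2} requires $\alpha>1$, and since here $\alpha\in\NN$, the condition $\alpha\ge2$ is exactly $\alpha>1$; hence \eqref{sampling-widths:p=q=2} applies and, combined with the equivalence $\varrho_n\asymp\varrho_n(\boldsymbol{\Hh}^\alpha,L_2(\RRd,\gamma))$, gives \eqref{W-sampling-widths:p=q=2}.

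There is essentially no hard step here: the corollary is a specialization, and the only thing that needs a (one-line) justification is that passing between the two equivalent norms changes all three widths and the error of $S_{\xi_n}$ only by multiplicative constants, which is immediate from homogeneity and monotonicity of $n$-widths. If one wished to be fully explicit, one could also note that the lower-bound construction in Theorem \ref{theorem:widths:p=q=2} (via the Tikhomirov theorem applied to $C\xi^{\alpha/2}U(\xi'_n)$) transfers verbatim, since $U(\xi)$ is defined intrinsically in $L_2(\RRd,\gamma)$ and the inclusion $C\xi^{\alpha/2}U(\xi)\subset\bW^\alpha_2(\RRd,\gamma)$ follows from Lemma \ref{lemma:N-eq} in the same way as the inclusion into $\boldsymbol{\Hh}^\alpha$.
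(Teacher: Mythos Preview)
Your proposal is correct and matches the paper's approach: the paper gives no separate proof for this corollary, treating it as an immediate consequence of Theorem~\ref{theorem:widths:p=q=2} via the identification of $W^\alpha_2(\RRd,\gamma)$ with $\Hh^\alpha$ for $\alpha\in\NN$ established by Lemma~\ref{lemma:N-eq}. Your explicit justification through homogeneity and monotonicity of the widths is exactly the one-line argument the paper leaves implicit.
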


We stress that the assumption $\alpha > 1$  for \eqref{sampling-widths:p=q=2}  is vital since  it is a necessary and sufficient condition for $\Hh^\alpha$ to be  a separable reproducing kernel Hilbert space 
with the finite trace condition \eqref{trace assumption} and therefore, the result \cite[Corollary 2]{DKU22} can be applied. We conjecture that the consequent asymptotic order \eqref{W-sampling-widths:p=q=2}  still holds true for $\alpha = 1$. Here it may require a different technique.

\section{Numerical comparison with other quadratures}\label{Sec-4}
We  illustrate the integration nodes of the quadratures constructed in the present paper, in comparison with the integration nodes used in \cite{DILP18}. Assume that $\{\bx_1,\ldots,\bx_n\}$ are the integration nodes for  an optimal quadrature $I_n$  for functions in ${W}^\alpha_p(\II^2)$. Then the integration nodes in \cite{DILP18} are just a dilation of these nodes to the cube $[-C\sqrt{\log n}, C\sqrt{\log n}]^2$. Hence these nodes are distributed similarly on this cube. Differently,  the integration nodes in our construction  are  formed from certain integer-shifted dilations of $\{\bx_1,\ldots,\bx_m\}$ and contained in the ball of radius $C\sqrt{\log n}$. These nodes are dense when they are near the origin and getting sparser as they are farther from the origin. The illustration is given in Figure \ref{Fig-1}.
\begin{figure}
	\begin{tabular}{cc}
		\includegraphics[height=7.5cm]{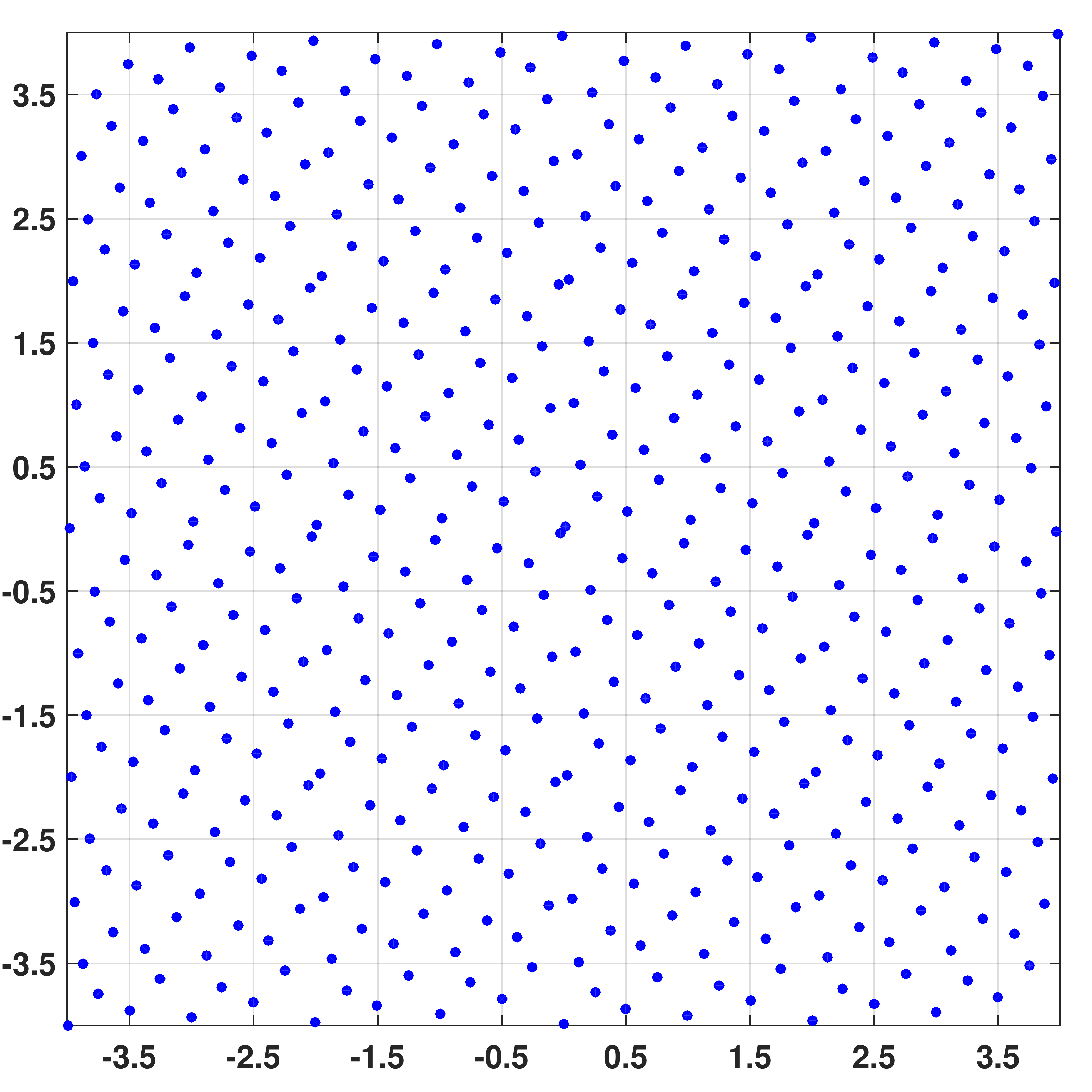}	 &  \includegraphics[height=7.5cm]{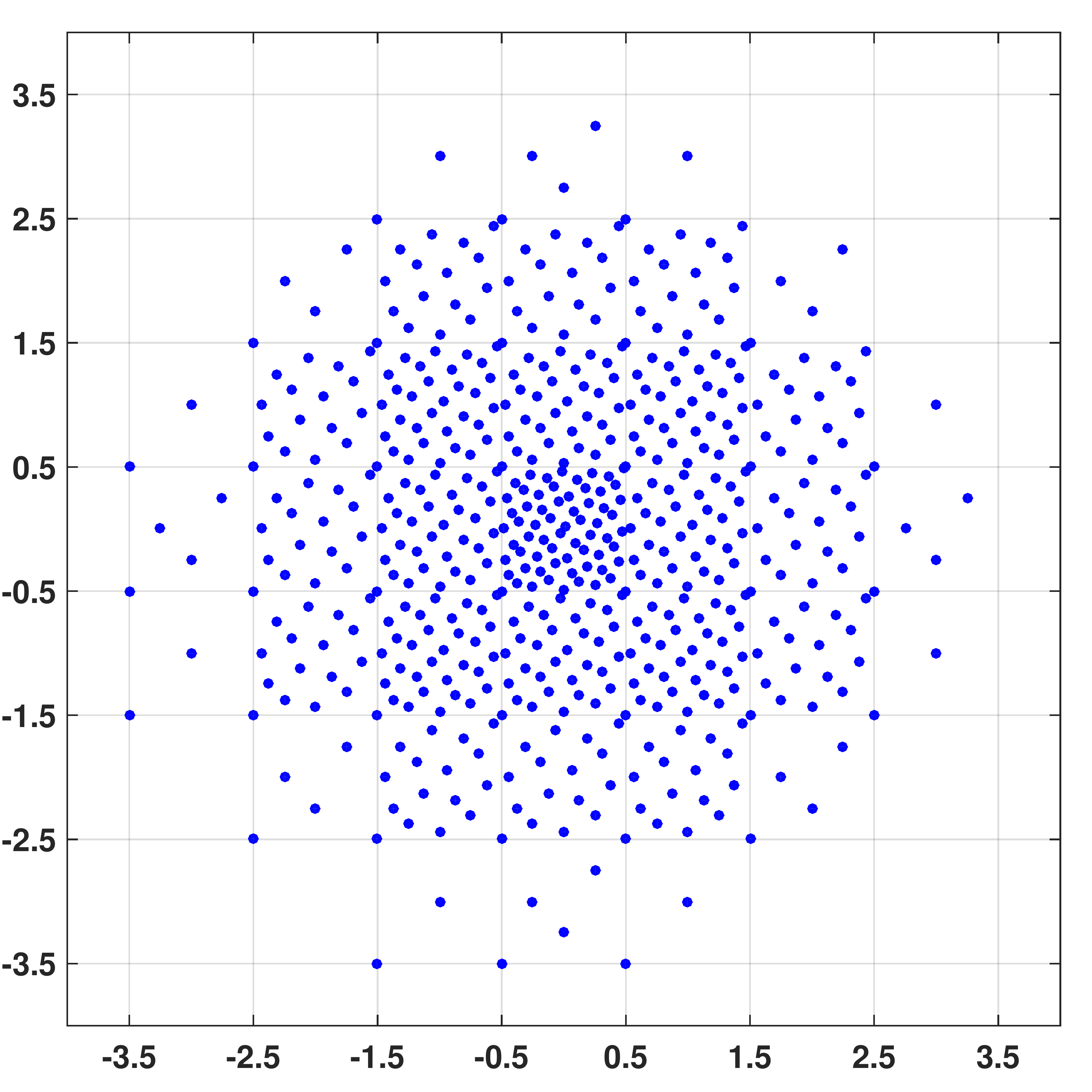}
		\\
		{Point set of Dick et al. \cite{DILP18} (512 points)} & {Point set of our construction (560 points)}
	\end{tabular}
\caption{Distribution of integration nodes in \cite{DILP18} and in this paper.}
\label{Fig-1}
\end{figure}

The following is a numerical test of our result for the cases $d=1$ and $\alpha = 1,2,3$. We consider the algorithm for the space ${W}^\alpha_2(\RR)$. For numerical integration of  functions in $\mathring{W}^\alpha_2(\II)$ we use the Smolyak point set. Observe that these nodes give the optimal convergence rate since $d=1$, see Section \ref{sec-optimal}. In this test $\delta$ in \eqref{[tau]<e^{-delta k}1}  is chosen as $\delta=\frac{1}{6}$. We apply  the method of change of variable by $\psi_3$ to get asymptotically optimal integration nodes and weights for functions in $W^\alpha_2(\II)$  where the function $\psi_3$ is defined as in \eqref{psin}. From these nodes and weights we get the optimal quadrature $\{x_1,\ldots,x_n\}$ and $\{\lambda_1,\ldots,\lambda_n\}$ for $W^\alpha_2(\RR,\gamma)$ as described in Section \ref{Subsec-AssemblingQuadratures}. The error of this quadrature  is given by
$$
{{\rm err}}=\Bigg(\bigg(1-\sum_{i=1}^n \lambda_i\bigg)^2+\sum_{k=1}^\infty \rho_{\alpha,k}\bigg(\sum_{i=1}^n\lambda_iH_k(x_i)\bigg)^2\Bigg)^{1/2},
$$
see, e.g., \cite[Section 4]{DILP18}.

For the numerical computation this error is replaced by the  truncated version
$$
{{\rm err}}_m=\Bigg(\bigg(1-\sum_{i=1}^n \lambda_i\bigg)^2+\sum_{k=1}^{m} \rho_{\alpha,k}\bigg(\sum_{i=1}^n\lambda_iH_k(x_i)\bigg)^2\Bigg)^{1/2}.
$$
In our test we choose $m=10^5$. Our result is given in Figure \ref{Fig-2} which shows that the worst-case errors of the assembled quadratures for $\alpha\in \{1,2,3\}$ have convergence rate $\mathcal{O}(n^{-\alpha})$. It has been observed in \cite{DILP18} that the interlaced  Sobol' sequence also gives the optimal convergence rates for numerical integration of $W^\alpha_2(\RR,\gamma)$. The numerical result reaffirms the theory in this paper.

\begin{figure}
\includegraphics[height=8.5cm]{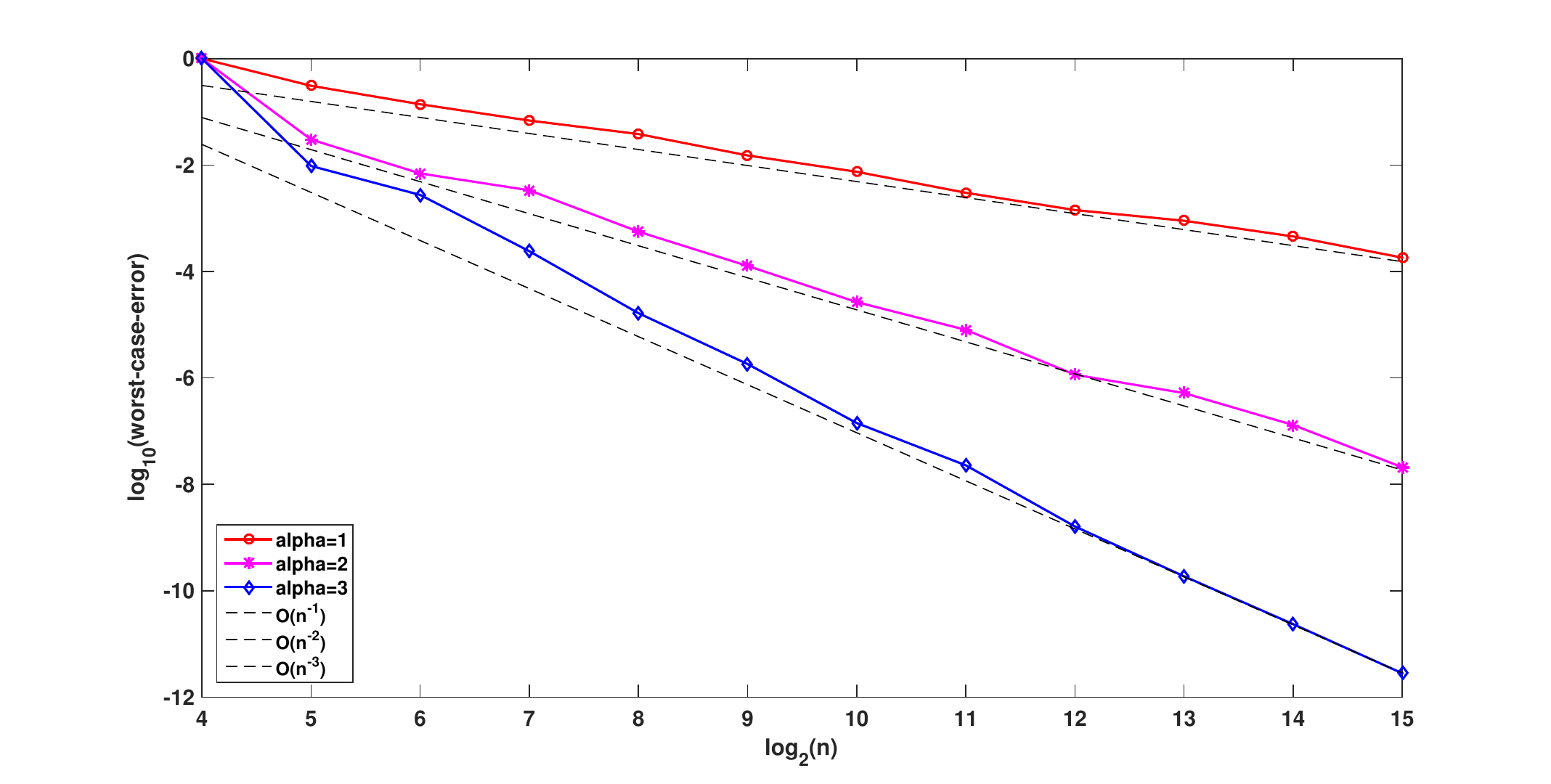}
\caption{Errors of the assembling quadratures.}
\label{Fig-2}
\end{figure}
\noindent
{\bf Acknowledgments:} This research is funded by Vietnam Ministry of Education and Training under Grant No. B2023-CTT-08. A part of this work was done when  the authors were working at the Vietnam Institute for Advanced Study in Mathematics (VIASM). They would like to thank  the VIASM for providing a fruitful research environment and working condition. The authors express special thanks to David Krieg and Mario Ullrich for useful discussions, in particular, for pointing out the recent paper \cite{DKU22} and for suggesting to include the results on sampling $n$-widths into the present paper. They express gratitude to the referees for valuable comments and suggestions which improved the presentation of this paper. 
\bibliographystyle{abbrv}

\bibliography{AllBib}
\end{document}